\documentclass[12pt]{amsart}
\usepackage{a4wide}
\usepackage[utf8]{inputenc}
\usepackage{amsmath}
\usepackage{amssymb}
\usepackage{amsfonts}
\usepackage{amsopn}
\usepackage{graphicx}
\usepackage{enumerate}
\usepackage{color}
\usepackage{mathtools}
\usepackage{mathrsfs}
\usepackage{bbm}
\usepackage{yhmath}
\usepackage{cite}
\usepackage[colorlinks,linkcolor=blue,anchorcolor=blue,citecolor=blue]{hyperref}

\setlength{\parskip}{0.3\baselineskip}

\newtheorem{theorem}{Theorem}[section]
\newtheorem{proposition}[theorem]{Proposition}
\newtheorem{lemma}[theorem]{Lemma}
\newtheorem{definition}[theorem]{Definition}
\newtheorem{corollary}[theorem]{Corollary}
\theoremstyle{definition}
\newtheorem{example}[theorem]{Example}
\theoremstyle{remark}

\numberwithin{equation}{section}

\newcommand{\mcal}{\mathcal}

\newcommand{\set}[1]{\left\{ #1 \right\}}

\newcommand{\C}{\mathbb{C}}
\newcommand{\R}{\mathbb{R}}

\newcommand{\Z}{\mathbb{Z}}
\newcommand{\N}{\mathbb{N}}
\newcommand{\PP}{\mathbb{P}}

\newcommand{\f}{\infty}
\newcommand{\la}{\langle}
\newcommand{\ra}{\rangle}
\newcommand{\wh}[1]{\widehat{#1}}

\newcommand{\ep}{\varepsilon}
\newcommand{\sm}{\setminus}
\newcommand{\D}{\;\mathrm{d}}

\title[Spectrality of Infinite Convolutions and Random Convolutions]{Spectrality of Infinite Convolutions and Random Convolutions}
\author[W. Li]{Wenxia Li}
\address[W. Li]{School of Mathematical Sciences, Key Laboratory of MEA (Ministry of Education) \& Shanghai Key Laboratory of PMMP, East China Normal University, Shanghai 200241, People's Republic of China}
\email{wxli@math.ecnu.edu.cn}

\author[J. J. Miao]{Jun Jie Miao}
\address[J. J. Miao]{School of Mathematical Sciences, Key Laboratory of MEA (Ministry of Education) \& Shanghai Key Laboratory of PMMP, East China Normal University, Shanghai 200241, People's Republic of China}
\email{jjmiao@math.ecnu.edu.cn}

\author[Z. Wang]{Zhiqiang Wang*}
\address[Z. Wang]{School of Mathematical Sciences, Key Laboratory of MEA (Ministry of Education) \& Shanghai Key Laboratory of PMMP, East China Normal University, Shanghai 200241,
People's Republic of China}
\email{zhiqiangwzy@163.com}

\subjclass[2020]{42C30, 28A80}
\thanks{* Corresponding author}

\begin{document}
\begin{abstract}
In this paper, we explore spectral measures whose square integrable spaces admit a family of exponential functions as an orthonormal basis.
Our approach involves utilizing the integral periodic zeros set of Fourier transform to characterize spectrality of infinite convolutions generated by a sequence of admissible pairs.
Then we delve into the analysis of the integral periodic zeros set.
Finally, we show that given finitely many admissible pairs, almost all random convolutions are spectral measures.
Moreover, we give a complete characterization of spectrality of random convolutions in some special cases.
\end{abstract}
\keywords{spectral measure, random convolution, integral periodic zero set}

\maketitle

\section{Introduction}

\subsection{Spectral measures and infinite convolutions}
In 1974, Fuglede \cite{Fuglede-1974} studied the existence of commuting self-adjoint partial differential operators and proposed the well-known spectral set conjecture.
\begin{quote}
  \textbf{Spectral Set Conjecture}: \emph{Let $\Gamma \subset \R^d$ be a measurable set with positive finite Lebesgue measure. Then there exists a set $\Lambda \subset \R^d$ such that $\{ e_{\lambda}(x) = e^{2\pi i \lambda \cdot x}: \lambda \in \Lambda\}$ forms an orthogonal basis for $L^2(\Gamma)$,
  if and only if $\Gamma$ tiles $\R^d$ by translations.}
\end{quote}
The spectral set conjecture connects the analysis and geometry on a set.
It remains open until Tao \cite{Tao-2004} gave the first counterexample in higher dimensions $d \ge 5$ in 2004.
Later, some counterexamples in lower dimensions were also constructed \cite{Farkas-Revesz-2006, Farkas-Matolcsi-Mora-2006,Kolountzakis-Matolcsi-2006a,Kolountzakis-Matolcsi-2006b,Matolcsi-2005}.
The conjecture is still open in dimensions $d=1$ and $d=2$.
Recently, Nev and Matolcsi \cite{Lev-Matolcsi-2019} showed that the spectral set conjecture holds in all dimensions for general convex bodies (that is, a compact convex set with non-empty interior).

A Borel probability measure $\mu$ on $\R^d$ is called a \emph{spectral measure} if there exists a set $\Lambda \subset \R^d$ such that the family of exponential functions
$$E(\Lambda)= \big\{ e_\lambda(x) = e^{2\pi i \lambda \cdot x} : \lambda \in \Lambda \big\}$$
forms an orthonormal basis for $L^2(\mu)$, where the set $\Lambda$ is called a \emph{spectrum} of $\mu$.
Note that $E(\Lambda)$ is an orthonormal basis for $L^2(\mu)$ if and only if
\begin{itemize}
  \item orthogonality: for all $\lambda \ne \lambda' \in \Lambda$,
  $$\la e_\lambda, e_{\lambda'} \ra_{L^2(\mu)} = \int_{\R^d} e^{2\pi i (\lambda - \lambda') \cdot x} \D \mu(x) = \wh{\mu}(\lambda'-\lambda)=0,$$ where $\wh{\mu}(\xi)$ is the Fourier transform of $\mu$;
  \item completeness: if $f \in L^2(\mu)$, and $\la f, e_{\lambda} \ra_{L^2(\mu)}=0$ for all $\lambda \in \Lambda$, then $f=0$ $\mu$-a.e.
\end{itemize}
In the context of classical Fourier analysis, the Lebesgue measure on the unit hypercube $[0,1]^d$ is a spectral measure with a spectrum $\Z^d$.
The support of a spectral measure typically exhibits a strong geometric structure, which is generally uncommon in nature.

In 1998, Jorgensen and Pedersen \cite{Jorgensen-Pedersen-1998} discovered that the self-similar measure $\mu_{4,\{0,2\}}$ satisfying the equation
$$\mu(\;\cdot\;) = \frac{1}{2} \mu(4 \;\cdot\;) + \frac{1}{2} \mu( 4 \;\cdot\; -2)$$
is a spectral measure with a spectrum
\begin{equation}\label{eq:Lambda-4-0-1}
  \Lambda = \bigcup_{n=1}^\f \big\{ \ell_1 + 4 \ell_2 + \cdots + 4^{n-1} \ell_n: \ell_1, \ell_2, \ldots, \ell_n \in \{0,1\} \big\},
\end{equation}
but the standard middle-third Cantor measure is not.
We refer readers to \cite{Hutchinson-1981} on self-similar sets and measures.
Note that the self-similar measure $\mu_{4,\{0,2\}}$ is mutually singular with respect to Lebesgue measure.
From then on, singularly continuous spectral measures have entered into the realm of fractal geometry and have been extensively explored \cite{An-Fu-Lai-2019,An-He-2014,An-He-He-2019,An-He-Lau-2015,Dutkay-Haussermann-Lai-2019,Laba-Wang-2002,Strichartz-2000, Fu-Wen-2017,Li-Miao-Wang-2021,Dai-Fu-Yan-2021,Dai-He-Lau-2014,Li-2009,Liu-Dong-Li-2017,Dutkay-Han-Sun-2009,Fu-He-2017, Fu-He-Wen-2018,He-Tang-Wu-2019,Li-Miao-Wang-2022,Dutkay-Jorgensen-2012,Li-Wu-2024,Liu-Lu-Zhou-2023b}.

Many surprising phenomena appear in singularly continuous spectral measures.
Besides the set $\Lambda$ defined in \eqref{eq:Lambda-4-0-1}, the sets $5\Lambda, 7\Lambda, 11\Lambda, 13\Lambda, 17\Lambda, \ldots$ are all spectra of $\mu_{4,\{0,2\}}$ \cite{Dutkay-Jorgensen-2012}. The scaling property was first found in \cite{Laba-Wang-2002} and is common to many singularly continuous spectral measures \cite{An-Dong-He-2022,He-Tang-Wu-2019,Fu-He-2017,Dutkay-Han-Sun-2009}.
When the spectrum exists, we could investigate the convergence of Fourier series of functions $$\sum_{\lambda \in \Lambda}~ \la f, e_\lambda \ra_{L^2(\mu)}\;e_\lambda(x).$$
However, the convergence of the mock Fourier series may be very different for distinct spectra of singularly continuous spectral measures \cite{Strichartz-2006,Dutkay-Han-Sun-2014,Fu-Tang-Wen-2022,Pan-Ai-2023}.
For the spectral measure $\mu_{4,\{0,2\}}$, Strichartz \cite{Strichartz-2006} proved that the mock Fourier series of continuous functions converges uniformly with respect to the spectrum $\Lambda$, but associated with the spectrum $17\Lambda$, Dutkey, Han and Sun \cite{Dutkay-Han-Sun-2014} showed that there exists a continuous function such that its mock Fourier series is divergent at $0$.
In addition, it was showed that for a class of Moran spectral measures, the Beurling dimension of spectra has the intermediate value property \cite{Li-Wu-2024}.
All of these findings suggest that spectra of singularly continuous spectral measures are more intricate than those of absolutely continuous spectral measures.
This motivates us to find or construct more singularly continuous spectral measures.

It has been proved that a compact supported spectral measure must be of pure type, that is, it is either discrete, or singularly continuous, or absolutely continuous \cite{He-Lai-Lau-2013}.
For absolutely continuous spectral measures, the density function must be constant on its support \cite{Dutkay-Lai-2014}, and thus this case is reduced to the original spectral set conjecture.
For discrete spectral measures, we introduce the concept of admissible pair, which is also used in convolution to construct singularly continuous spectral measures.

Our focus is on the real line.
For a finite subset $A \subset \R$, we define the discrete measure
$$\delta_A = \frac{1}{\# A} \sum_{a\in A} \delta_a, $$
where $\#$ denotes the cardinality of a set and $\delta_a$ denotes the Dirac measure concentrated on the point $a$.
Given an integer $N$ with $|N| \ge 2$ and a finite subset $B \subset\Z$ with $\# B \ge 2$, we say that $(N,B)$ is an \emph{admissible pair} if the discrete measure $\delta_{N^{-1} B}$ admits a spectrum $L\subset \Z$, that is, the matrix
\begin{equation}\label{unitary-matrix}
  \left( \frac{1}{\sqrt{\# B}} e^{-2\pi i \frac{b\ell}{N}} \right)_{b \in B, \ell \in L}
\end{equation}
is unitary. To emphasize the set $L$, $(N,B,L)$ is also called a \emph{Hadamard triple}.
If there are finitely many admissible pairs $\{(N_k,B_k)\}_{k=1}^n$, then the convolution
$$\mu_{n} = \delta_{N_1^{-1} B_1} * \delta_{(N_1 N_2)^{-1} B_2} * \cdots * \delta_{(N_1N_2 \cdots N_n)^{-1} B_n}$$
is a spectral measure with a spectrum
$$\Lambda_n = L_1 + N_1 L_2 + N_1 N_2 L_3 + \cdots + N_1 N_2 \cdots N_{n-1} L_n,$$
where $L_k \subset \Z$ is a spectrum of $\delta_{N_k^{-1} B_k}$ for all $1 \le k\le n$.
A natural question comes to mind.
\begin{quote}
  \textbf{Question}: \emph{Given a sequence of admissible pair $\{(N_k,B_k)\}_{k=1}^\f$, under what condition is the infinite convolution
  \begin{equation}\label{mu-infinite-convolution}
    \mu = \delta_{N_{1}^{-1} B_1} * \delta_{(N_1 N_2)^{-1} B_2} * \cdots * \delta_{(N_1 N_2 \cdots N_k)^{-1} B_k } * \cdots.
  \end{equation}
  a spectral measure ?}
\end{quote}

The infinite convolution generated by a sequence of admissible pairs was first raised by Strichartz \cite{Strichartz-2000} to construct more spectral measures.
If the infinite convolution defined in \eqref{mu-infinite-convolution} exists, then it must be of pure type \cite[Theorem~35]{Jessen-Wintner-1935}, and in most cases it is singularly continuous.
The admissible pair assumption implies the existence of an infinite mutually orthogonal set of exponential functions, but it is difficult to show the completeness.
When $(N_k,B_k) = (N,B)$ for all $k \ge 1$, the infinite convolution is reduced to self-similar measure
$$\mu_{N,B} = \delta_{N^{-1} B} * \delta_{N^{-2} B} * \cdots * \delta_{N^{-k}B} * \cdots.$$
{\L}aba and Wang \cite{Laba-Wang-2002} showed that if $(N,B)$ is an admissible pair, then the self-similar measure $\mu_{N,B}$ is a spectral measure, and Dutkay, Haussermann and Lai \cite{Dutkay-Haussermann-Lai-2019} generalized it to self-affine measures in higher dimensions.

However, the admissible pair assumption alone is not enough to guarantee that the corresponding infinite convolution is a spectral measure (see Example 4.3 in \cite{An-He-He-2019}), even if the sequence of admissible pairs is chosen from a finite set of admissible pairs (see Example 1.8 in \cite{Dutkay-Lai-2017}).
Nevertheless, it is widely believed that negative examples are very rare.
An, Fu and Lai \cite{An-Fu-Lai-2019} introduced the concept of equi-positivity, and used the integral periodic zero set to define an admissible family, both of which have been extensively employed in analysing the spectrality of infinite convolutions \cite{Li-Miao-Wang-2021,Li-Miao-Wang-2022,Lu-Dong-Zhang-2022,Liu-Lu-Zhou-2023b}.

In the paper, we first simply the admissible family condition for spectrality of infinite convolutions (see Theorem \ref{main-spectrality}).
Then we focus on the integral periodic zero set.
For general Borel probability measures, we use the structure of support to characterize the integral periodic zero set (see Theorem \ref{theorem-IPZS-empty-1}).
For infinite convolutions generated by admissible pairs, we generalize the argument for self-similar measures (see Theorem \ref{theorem-IPZS-empty-2} and the proof).
The analysis becomes more complicated due to the absence of self-similarity.
Next, the above results are applied to the random convolution generated by finitely many admissible pairs.
We show that almost all random convolutions are spectral measures (see Theorem \ref{general-theorem}), which primarily relies on the ergodic property of symbolic space and Theorem \ref{theorem-IPZS-empty-2}.
This means that the non-spectral random convolution is null in the sense of measure.
Finally, we study a special random convolution generated by two admissible pairs by analysing the support of measures (see Theorem \ref{special-case}).

\subsection{Main results}
We always assume that the infinite convolution $\mu$ defined in \eqref{mu-infinite-convolution} exists as a Borel probability measure in weak limit sense, see \cite{Li-Miao-Wang-2022} for the sufficient and necessary condition of weak convergence of infinite convolutions.
The infinite convolution $\mu$ may be rewritten as $\mu = \mu_{n} * \mu_{>n}$,
where $$\mu_n= \delta_{N_{1}^{-1} B_1} * \delta_{(N_1 N_2)^{-1} B_2} * \cdots * \delta_{(N_1 N_2 \cdots N_n)^{-1} B_n },$$
and $$ \mu_{>n} = \delta_{(N_1 N_2 \cdots N_{n+1})^{-1} B_{n+1} } * \delta_{(N_1 N_2 \cdots N_{n+2})^{-1} B_{n+2} } * \cdots. $$
Then for $n \ge 1$, we define
\begin{equation}\label{nu-large-than-n}
  \nu_{>n}(\;\cdot\;) = \mu_{>n}\left( \frac{1}{N_1 N_2 \cdots N_n} \; \cdot\; \right),
\end{equation}
that is, $$\nu_{>n}=\delta_{N_{n+1}^{-1} B_{n+1}} * \delta_{(N_{n+1} N_{n+2})^{-1} B_{n+2}} * \cdots.$$

We write $\mathcal{P}(\R)$ for the set of all Borel probability measures on $\R$.
For $\nu \in \mathcal{P}(\R)$, we write
\begin{equation}\label{integral-periodic-zero}
  \mcal{Z}(\nu) = \set{\xi \in \R:\; \wh{\nu}(\xi+k) = 0 \text{ for all } k \in \Z}
\end{equation}
for the {\it integral periodic zero set} of Fourier transform of $\nu$.
Using the integral periodic zero set, An, Fu and Lai \cite{An-Fu-Lai-2019} defined the admissible family to analysis the spectrality.
Here, we simply the admissible family condition.

\begin{theorem}\label{main-spectrality}
  Given a sequence of admissible pair $\{(N_k,B_k)\}_{k=1}^\f$, suppose that the infinite convolution $\mu$ defined by \eqref{mu-infinite-convolution} exists, and the sequence $\{ \nu_{>n} \}$ is defined by \eqref{nu-large-than-n}.
  If there exists a subsequence $\{\nu_{>n_j}\}$ which converges weakly to $\nu$, and $\mcal{Z}(\nu)=\emptyset$, then $\mu$ is a spectral measure with a spectrum in $\Z$.
\end{theorem}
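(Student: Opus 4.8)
The plan is to exhibit an explicit candidate spectrum built from the Hadamard structure and to verify completeness through the Jorgensen--Pedersen criterion. First I would, for each $k$, fix a spectrum $L_k \sse \Z$ of $\delta_{N_k^{-1}B_k}$ with $0 \in L_k$, so that the matrix \eqref{unitary-matrix} is unitary, and set $p_n = N_1 N_2 \cdots N_n$, $p_0 = 1$, together with
$$\Lambda_n = L_1 + p_1 L_2 + \cdots + p_{n-1} L_n, \qquad \Lambda = \bigcup_{n \ge 1} \Lambda_n.$$
The unitarity of \eqref{unitary-matrix} makes each $\Lambda_n$ a spectrum of $\mu_n$ and forces $\set{e_\lambda : \lambda \in \Lambda}$ to be an orthonormal family in $L^2(\mu)$ (this is the mutually orthogonal set alluded to in the introduction). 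By the Jorgensen--Pedersen criterion it then remains to prove completeness, that is
$$Q(\xi) := \sum_{\lambda \in \Lambda} \bigl| \wh{\mu}(\xi + \lambda) \bigr|^2 = 1 \qquad (\xi \in \R),$$
where $Q \le 1$ always holds and $Q(0) = 1$.

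Next I would record a refinement identity linking $\mu$ to its tails. Writing $\mu = \mu_n * \mu_{>n}$, $\wh{\mu_{>n}}(\eta) = \wh{\nu_{>n}}(\eta/p_n)$, observing that $\wh{\mu_n}$ is $p_n$-periodic (since $B_k \sse \Z$), and using the decomposition $\Lambda = \Lambda_n + p_n \Lambda^{(n)}$, where $\Lambda^{(n)} = L_{n+1} + N_{n+1} L_{n+2} + \cdots$ is exactly the candidate spectrum of $\nu_{>n}$, one obtains for every $n$
$$1 - Q(\xi) = \sum_{a \in \Lambda_n} \bigl| \wh{\mu_n}(\xi + a) \bigr|^2 \Bigl( 1 - Q^{(n)}\bigl( \tfrac{\xi + a}{p_n} \bigr) \Bigr),$$
where $Q^{(n)}(\zeta) = \sum_{b \in \Lambda^{(n)}} |\wh{\nu_{>n}}(\zeta + b)|^2 \le 1$ and $\sum_{a \in \Lambda_n} |\wh{\mu_n}(\xi + a)|^2 = 1$. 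Since the left-hand side is independent of $n$, it suffices to produce a single subsequence along which the right-hand side tends to $0$, and the natural candidate is $\{n_j\}$.

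The role of the hypothesis $\mcal{Z}(\nu) = \emptyset$ is to make the tails uniformly non-degenerate. Since $\nu$ is a probability measure, $\wh{\nu}$ is continuous; as every $\xi \in [0,1]$ lies outside $\mcal{Z}(\nu)$, for each such $\xi$ some $\wh{\nu}(\xi + k) \ne 0$, and continuity together with the compactness of $[0,1]$ yields a finite set $F \sse \Z$ and $\ep_0 > 0$ with $\max_{k \in F} |\wh{\nu}(\xi + k)| \ge \ep_0$ for all $\xi$ (equi-positivity of $\nu$). Weak convergence $\nu_{>n_j} \to \nu$ gives $\wh{\nu_{>n_j}} \to \wh{\nu}$ uniformly on compact sets, so for $j$ large the same lower bound, with $\ep_0/2$, holds for $\wh{\nu_{>n_j}}$; thus $\{\nu_{>n_j}\}$ is equi-positive with a common constant. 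The aim is then to feed this uniform positivity into the refinement identity so as to force the weighted defect on the right-hand side to vanish along $\{n_j\}$, whence $1 - Q(\xi) = 0$.

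The hard part will be precisely this last implication. Equi-positivity only furnishes a pointwise lower bound $\max_{k \in F} |\wh{\nu_{>n_j}}(\zeta + k)| \ge \ep_0/2$ through integer translates that need not lie in the spectrum $\Lambda^{(n_j)}$, whereas to annihilate the defect one must show that the complementary mass $1 - Q^{(n_j)}(\zeta)$ is genuinely small at the points $\zeta = (\xi + a)/p_{n_j}$ carrying almost all of the weight $|\wh{\mu_{n_j}}(\xi + a)|^2$. I expect to reconcile these by a compactness argument run in the contrapositive: assuming $Q(\xi_0) < 1$, the refinement identity pins, at every level $n_j$, a definite amount of mass onto points $\zeta$ where $Q^{(n_j)}$ stays bounded away from $1$; extracting (modulo $\Z$) a limit frequency $\zeta^\ast$ and invoking the uniform convergence $\wh{\nu_{>n_j}} \to \wh{\nu}$ should force all integer translates of $\wh{\nu}$ to vanish at $\zeta^\ast$, i.e. $\zeta^\ast \in \mcal{Z}(\nu)$, contradicting the hypothesis. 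Making this extraction rigorous --- controlling how $\Lambda^{(n_j)}$ and the translate set $F$ interact in the limit, and ensuring the bad mass concentrates at a single periodic zero rather than dispersing --- is the delicate heart of the argument.
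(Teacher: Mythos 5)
Your middle step --- deducing from $\mcal{Z}(\nu)=\emptyset$, the continuity of $\wh{\nu}$, the compactness of $[0,1]$, and the uniform convergence $\wh{\nu}_{>n_j}\to\wh{\nu}$ on compact sets that the family $\{\nu_{>n_j}\}_{j\ge j_0}$ is equi-positive with common constants $\ep,\delta$ --- is, essentially verbatim, the paper's entire proof. The paper stops there because it invokes Theorem \ref{spectrality-1} (proved in \cite{Li-Miao-Wang-2021}, and in \cite{An-Fu-Lai-2019} under a no-overlap condition) as a black box: equi-positivity of a subsequence of the tails already implies that $\mu$ is spectral. Everything before and after that step in your proposal is an attempt to reprove this black box, and that is where the gaps lie.

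Two concrete problems. First, your candidate spectrum $\Lambda=\bigcup_n(L_1+p_1L_2+\cdots+p_{n-1}L_n)$ with the $L_k$ fixed once and for all is in general \emph{not} a spectrum, even under the hypotheses of the theorem: in the known proofs the representatives of $L_k$ modulo $N_k$ (Lemma \ref{admissible-pair-lemma}(ii)) are chosen \emph{adaptively}, level by level, precisely by inserting the integers $k_{x,\mu}$ furnished by equi-positivity so that the tail transform is bounded below at the frequencies carrying the mass; with a fixed canonical $\Lambda$ the defect $1-Q(\xi)$ can be positive even though $\mu$ is spectral, so your refinement identity cannot be driven to zero for that $\Lambda$. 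Second, the closing contrapositive is not sound as sketched: $Q^{(n_j)}(\zeta)$ bounded away from $1$ only says that the particular countable set $\Lambda^{(n_j)}$ fails to be complete for $\nu_{>n_j}$ at $\zeta$; it does not make any integer translate of $\wh{\nu}_{>n_j}$ vanish, so no limit point $\zeta^\ast$ is forced into $\mcal{Z}(\nu)$, and the contradiction you aim for never materializes. Bridging exactly this gap is the content of the cited Theorem \ref{spectrality-1}, whose proof proceeds by a different, inductive and adaptive construction of the spectrum. If you are permitted to quote that theorem, your argument collapses to its correct middle third and coincides with the paper's; if not, the completeness step remains unproven.
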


The integral periodic zero set plays an crucial role in determining the spectrality of infinite convolutions.
It has been showed in \cite{An-Fu-Lai-2019} that if $\nu \in \mathcal{P}(\R)$ with $\mathrm{spt}(\nu) \subset [0,1]$, then $\mathcal{Z}(\nu) = \emptyset$ if and only if $\nu = \frac{1}{2} \delta_0 + \frac{1}{2} \delta_1$.
It was also pointed out that for the Borel probability measure outside $[0,1]$, the integral periodic zero set cannot be easily analyzed.
We use the structure of support to characterize the integral periodic zero set.

\begin{theorem}\label{theorem-IPZS-empty-1}
  Let $\nu \in \mathcal{P}(\R)$.
  If there exists a Borel subset $E \subset \R$ such that $\nu(E)>0$, and $$ \nu( E+k ) =0 $$ for all $k \in \Z\setminus\{0\}$,
  then we have that $\mcal{Z}(\nu) = \emptyset$.
\end{theorem}

For infinite convolutions generated by a sequence of admissible pairs, we generalize the argument for self-similar measures in \cite[Theorem~5.4]{Dutkay-Haussermann-Lai-2019}.
The analysis becomes more complicated due to the absence of self-similarity.
For a finite subset $B \subset \Z$, we set
$$ M_{B}(\xi) = \frac{1}{\# B} \sum_{b \in B} e^{-2\pi i b \xi}. $$
For a function $f:\R \to \C$, we denote the zero set of $f$ by
$$\mathcal{O}(f)=\big\{ x\in \R: f(x) =0 \big\}. $$
A set $A \subset \R$ is called \emph{discrete} if the set $A$ has no accumulation points, that is, for $h >0$ the set $[-h,h] \cap A$ is finite.

\begin{theorem}\label{theorem-IPZS-empty-2}
  Let $\{(N_k, B_k)\}_{k=1}^\f$ be a sequence of admissible pairs.
  The infinite convolution $\mu$ and the sequence $\{\nu_{>n}\}$ are defined by \eqref{mu-infinite-convolution} and \eqref{nu-large-than-n}.
  Suppose that

  {\rm(i)} there exists a weak convergent subsequence $\{\nu_{>n_j}\}$;

  {\rm(ii)} the set $$\bigcup_{k=1}^\f N_k \mcal{O}(M_{B_k})$$ is discrete;

  {\rm(iii)} for each $k \ge 1$, $$\mathrm{gcd}\left( \bigcup_{j=k}^\f (B_j - B_j) \right) =1.$$

  \noindent
  Then we have that $\mcal{Z}(\mu)=\emptyset$.
\end{theorem}

Note that the set $\mathcal{O}(M_B)$ is discrete for every finite subset $B \subset \Z$ because $M_B(\xi)$ is extendable to an entire function on the complex plane.
If the sequence of admissible pairs $\set{(N_k, B_k)}_{k=1}^\f$ is chosen from a finite set of admissible pairs, then it is clear that the conditions {\rm(i)} and {\rm(ii)} in Theorem \ref{theorem-IPZS-empty-2} hold.
We immediately have the following corollary.
\begin{corollary}\label{coro-finite-admissible-pair}
  Suppose that the sequence of admissible pairs $\{ (N_k, B_k)\}_{k=1}^\f$ is chosen from a finite set of admissible pairs.
  If for each $k \ge 1$, $$\mathrm{gcd}\left( \bigcup_{j=k}^\f (B_j - B_j) \right) =1,$$
  then we have that $\mcal{Z}(\mu)=\emptyset$.
\end{corollary}

Next we apply the above results to random convolutions.
Let $\{ (N_j, B_j) \}_{j=1}^m$ be finitely many admissible pairs.
Let $\Omega=\{1,2,\ldots, m\}^\N$ be the symbolic space over the alphabet $\{1,2,\ldots, m\}$.
For $\omega = (\omega_k)_{k=1}^\f \in \Omega$, we define the random convolution
$$\mu_\omega = \delta_{ N_{\omega_1}^{-1} B_{\omega_1} } * \delta_{(N_{\omega_1} N_{\omega_2})^{-1} B_{\omega_2} } * \cdots * \delta_{ (N_{\omega_1} N_{\omega_2} \cdots N_{\omega_k})^{-1} B_{\omega_k} } * \cdots. $$
More generally, given a sequence of positive integers $\{n_k\}$, for $\omega = (\omega_k)_{k=1}^\f \in \Omega$, we define the infinite convolution
$$ \mu_{\omega,\{n_k\}} = \delta_{ N_{\omega_1}^{-n_1} B_{\omega_1} } * \delta_{ N_{\omega_1}^{-n_1} N_{\omega_2}^{-n_2} B_{\omega_2} } * \cdots * \delta_{ N_{\omega_1}^{-n_1} N_{\omega_2}^{-n_2} \cdots N_{\omega_k}^{-n_k} B_{\omega_k} } * \cdots. $$
Note that $\{(N_{\omega_k}^{n_k}, B_{\omega_k})\}_{k=1}^\f$ is a sequence of admissible pairs.
If $n_k =1$ for all $k\ge 1$, then we have that $\mu_{\omega,\{n_k\}} = \mu_{\omega}$.
However, if the sequence $\{n_k\}$ is unbounded, the infinite convolution $\mu_{\omega,\{n_k\}}$ is not generated by finitely many admissible pairs.

The random convolution was first studied by Strichartz in \cite{Strichartz-2000}, where he constructed the spectrum under a specific uniform separation condition. But in general, this condition is challenging to verify.
If finitely many admissible pairs $\{(N,B_j)\}_{j=1}^m$ satisfy that the set $L \subset \Z$ is a common spectrum for all discrete measures $\delta_{N^{-1}B_j},1 \le j \le m$, then An, He and Lau \cite{An-He-Lau-2015} could construct the spectrum under the condition that $L+L \subset \{0,1,\ldots,N - 1\}$, and Dutkay and Lai \cite{Dutkay-Lai-2017} showed that almost all random convolutions admit a common spectrum.
The authors \cite{Li-Miao-Wang-2021} showed that if $\gcd(B_j - B_j) =1$ for all $1 \le j \le m$,
then all infinite convolutions $\mu_{\omega,\{n_k\}}$ are spectral measures.
Considering the Bernoulli measure on symbolic space, we prove that almost all random convolutions are spectral measures.

\begin{theorem}\label{general-theorem}
  Given finitely many admissible pairs $\set{(N_j, B_j)}_{j=1}^m$ and a sequence of positive integers $\{n_k\}$, for every Bernoulli measure $\PP$ on $\Omega$, the infinite convolution $\mu_{\omega,\{n_k\}}$ is a spectral measure for $\PP$-a.e. $\omega \in \Omega$.
  In particular, the random convolution $\mu_\omega$ is a spectral measure for $\PP$-a.e. $\omega \in \Omega$.
\end{theorem}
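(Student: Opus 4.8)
The plan is to reduce $\mu_{\omega,\{n_k\}}$ to an infinite convolution generated by a sequence of admissible pairs and then invoke Theorem \ref{main-spectrality}. First I would record that if $(N,B)$ is admissible with spectrum $L\sse\Z$, then so is $(N^{n},B)$: the set $N^{n-1}L\sse\Z$ is a spectrum, since $e^{-2\pi i b(N^{n-1}\ell)/N^{n}}=e^{-2\pi i b\ell/N}$ reproduces the unitary matrix \eqref{unitary-matrix}. Writing $\hat N_k=N_{\omega_k}^{n_k}$ and $\hat B_k=B_{\omega_k}$, the measure $\mu_{\omega,\{n_k\}}$ is precisely the infinite convolution \eqref{mu-infinite-convolution} attached to the admissible pairs $\{(\hat N_k,\hat B_k)\}$, and its weak convergence follows from the uniform boundedness of the finitely many digit sets together with $\hat N_k\ge 2$. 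Thus Theorem \ref{main-spectrality} reduces the whole problem to producing, for $\PP$-a.e.\ $\omega$, a weakly convergent subsequence $\nu_{>n_j}\to\nu$ with $\mcal{Z}(\nu)=\emptyset$.

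Next I would dispose of the case where $\{n_k\}$ is unbounded, which requires no randomness. Setting $\beta=\max_j\max_{b\in B_j}|b|$, every point of $\mathrm{supp}\,\nu_{>n}$ has modulus at most $2\beta/\hat N_{n+1}$; choosing $n=k-1$ along indices $k$ with $n_k\to\f$ forces $\hat N_{n+1}=N_{\omega_{n+1}}^{n_{n+1}}\to\f$ and hence $\nu_{>n}\to\delta_0$ weakly. Since $\wh{\delta_0}\equiv 1$ gives $\mcal{Z}(\delta_0)=\emptyset$, Theorem \ref{main-spectrality} already yields spectrality for every $\omega$ when $\{n_k\}$ is unbounded. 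We may therefore assume $\{n_k\}$ is bounded, say $n_k\le C$; then every pair $(\hat N_k,\hat B_k)$ lies in the finite family $\mcal{F}=\set{(N_j^{c},B_j):1\le j\le m,\ 1\le c\le C}$ of admissible pairs, and the problem becomes one about random convolutions drawn from finitely many admissible pairs.

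In this bounded regime I would run a Borel--Cantelli argument against the independent coordinates of the Bernoulli measure. One fixes in advance a finite word $W$ over the symbols in the support of $\PP$ whose associated block forces the hypothesis of Theorem \ref{theorem-integral-periodic-zero-empty-2} on the resulting limit. Over disjoint coordinate windows the events $\set{\omega:(\omega_{n+1},\dots,\omega_{n+|W|})=W}$ are independent with a fixed positive probability, so by the second Borel--Cantelli lemma $W$ occurs in the tail of $\omega$ at infinitely many positions for $\PP$-a.e.\ $\omega$. Along these positions the pair-sequences $\bigl((\hat N_{n+r},\hat B_{n+r})\bigr)_{r\ge1}$ live in the compact space $\mcal{F}^{\N}$, so I can extract a subsequence along which they converge coordinatewise; combined with the uniform geometric smallness of the tails this gives weak convergence $\nu_{>n_j}\to\nu$, where $\nu$ is an infinite convolution of admissible pairs from $\mcal{F}$ carrying the prescribed block. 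Theorem \ref{theorem-integral-periodic-zero-empty-2} then yields $\mcal{Z}(\nu)=\emptyset$, and Theorem \ref{main-spectrality} finishes the proof, the exceptional set being contained in the $\PP$-null set on which $W$ fails to recur.

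The main obstacle I expect is exactly this last extraction: the property $\mcal{Z}(\nu)=\emptyset$ is \emph{not} preserved under arbitrary weak limits, so the recurrent block must be chosen strong enough that the sufficient condition of Theorem \ref{theorem-integral-periodic-zero-empty-2} survives passage to the weak limit and the diagonalization, independently of the uncontrolled remainder of the tail of $\nu$. This is where a uniform, equi-positive form of the non-vanishing is needed, and where one must verify that a suitable block exists for every finite family $\mcal{F}$ arising from admissible pairs and that the coordinatewise stabilization produces a genuine admissible-pair infinite convolution rather than a degenerate object. The coupling between the deterministic exponents $\{n_k\}$ and the random symbols $\{\omega_k\}$ is precisely what makes this delicate, and is the reason the bounded case cannot be handled as cleanly as the unbounded one.
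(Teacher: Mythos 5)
Your reduction to the admissible pairs $(N_{\omega_k}^{n_k},B_{\omega_k})$ and your treatment of the unbounded case are correct and coincide with the paper's Proposition \ref{prop-unbounded}. The gap is in the bounded case, and it is exactly the obstacle you flag at the end without resolving: you never exhibit a block $W$ for which the limit measure $\nu$ satisfies the hypothesis of Theorem \ref{theorem-integral-periodic-zero-empty-2}, and in general no such block exists. That theorem requires a Borel set $E$ with $\nu(E)>0$ and $\nu(E+k)=0$ for all $k\in\Z\setminus\{0\}$; for admissible pairs whose digit sets are spread out (e.g.\ $B=\{0,3\}$, $N=2$, where the attractor is $[0,3]$ and genuinely overlaps its integer translates), the weak limit of the tails need not have any such set, no matter which finite prefix you force by Borel--Cantelli. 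Moreover, since only a prefix of the limit word is controlled by the recurrent block while the rest of the tail is arbitrary, you cannot certify the non-vanishing condition from the block alone. So the argument as written does not close.

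The paper's proof of Proposition \ref{prop-bounded} uses a different certificate for $\mcal{Z}(\nu)=\emptyset$, namely the gcd criterion of Theorem \ref{theorem-integral-periodic-zero-empty-1} (via Corollary \ref{coro-finite-admissible-pair}), which is tailored to survive the weak limit. Two ingredients are essential and missing from your proposal. First, a normalization: replace each $B_j$ by $B_j'=(B_j-b_j)/d$ with $d=\gcd\bigl(\bigcup_j(B_j-B_j)\bigr)$, which preserves admissibility (Lemma \ref{admissible-pair-lemma}) and spectrality (Lemma \ref{spectrality-invariant}) and arranges $\gcd\bigl(\bigcup_j(B_j'-B_j')\bigr)=1$. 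Second, the recurrent pattern is not a fixed finite block but arbitrarily long powers of $12\cdots m$: by the Birkhoff ergodic theorem (Lemma \ref{lemma-4-3}), for $\PP$-a.e.\ $\omega$ there is a subsequence with $\sigma^{k_j}(\omega)\to(12\cdots m)^\f$, so after a further compactness extraction in the exponents the tails $\nu_{>k_j}$ converge weakly (Lemma \ref{lemma-uniform-approximation}) to $\mu_{\eta,\{m_k\}}$ with $\eta=(12\cdots m)^\f$. Because every symbol occurs in every tail of $\eta$, condition (iii) of Theorem \ref{theorem-integral-periodic-zero-empty-1} holds at every level, giving $\mcal{Z}(\mu_{\eta,\{m_k\}})=\emptyset$, and Theorem \ref{main-spectrality} concludes. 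Replacing your block-recurrence step with this normalization plus the ergodic-theorem convergence to the full periodic word is what is needed to complete the proof.
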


Generally, it is difficult to improve the ``almost all'' answer to a deterministic answer.
For the following special case, we give a complete characterization of spectrality of random convolutions.

\begin{theorem}\label{special-case}
  Given an integer $t\ge 1$ and two coprime integers $N,p \ge 2$, let $$\Omega = \{1,2\}^\N,\; N_1 = N_2 = tN,\; B_1=\set{0,1,\ldots, N - 1},\; B_2 =p \cdot \set{0,1,\ldots, N - 1} .$$

  {\rm(i)} If $t=1$, then for $\omega\in \Omega$, the infinite convolution $\mu_\omega$ is a spectral measure if and only if $\omega = 2^\f$ or the symbol ``1" occurs infinitely many times in $\omega$.

  {\rm(ii)} If $t \ge 2$, then for all $\omega\in \Omega$, the infinite convolution $\mu_\omega$ is a spectral measure.
\end{theorem}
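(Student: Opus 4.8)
The plan is to use that $N_1=N_2=tN$, so every convolution has the single scale $(tN)^{-k}$ and, writing $\sigma$ for the shift on $\Omega$, one has $\nu_{>n}=\mu_{\sigma^n\omega}$. Recording the mask $M_{B}(\xi)=\frac{1}{\#B}\sum_{b\in B}e^{-2\pi i b\xi}$, we have $M_{B_1}(\xi)=0$ iff $\xi\in\frac1N\Z\sm\Z$ and $M_{B_2}(\xi)=M_{B_1}(p\xi)=0$ iff $\xi\in\frac{1}{pN}\Z\sm\frac1p\Z$; hence the $k$-th factor of $\wh{\mu_\tau}$ vanishes on an \emph{integer} lattice when $\tau_k=1$ and on a lattice inside $\frac1p\Z$ when $\tau_k=2$. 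The structural remark driving everything is that all zeros of $\wh{\mu_\tau}$ lie in $\frac1p\Z$, so $\mcal Z(\mu_\tau)\sse\frac1p\Z$, and at a non-integer point $\xi+j$ only the type-$2$ factors can produce a zero. Both parts are then obtained from the template: by compactness of $\Omega$ pick an accumulation point $\tau$ of $\{\sigma^n\omega\}$, note $\mu_{\sigma^{n_j}\omega}\to\mu_\tau$ weakly (the $\wh{\mu}$ converge pointwise and the measures are uniformly compactly supported in $[0,\tfrac{p(N-1)}{tN-1}]$), and invoke Theorem \ref{main-spectrality} once $\mcal Z(\mu_\tau)=\emptyset$ is checked.

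For (ii), I would first prove the monotonicity $\mcal Z(\mu_\tau)\sse\mcal Z(\mu_{2^\f})$: for $\xi=a/p$ with $p\nmid a$ every point $\xi+j$ is non-integral, so $\wh{\mu_\tau}(\xi+j)=0$ forces some type-$2$ factor to vanish, and type-$2$ factors occur for $\mu_{2^\f}$ at \emph{every} level; hence a zero of $\wh{\mu_\tau}$ is a zero of $\wh{\mu_{2^\f}}$. It thus suffices to show $\mcal Z(\mu_{2^\f})=\emptyset$ when $t\ge 2$. Fix $\xi=a/p$, $p\nmid a$; I must find $j$ with $x:=a+jp\ne 0$ such that for all $k$, $t^kN^{k-1}\mid x\Rightarrow t^kN^k\mid x$. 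Since $\{k:t^kN^{k-1}\mid x\}$ is an initial segment $\{1,\dots,s\}$, a short computation shows this is equivalent to $x$ being divisible by $t^sN^s$, where $s=\max\{k:t^kN^{k-1}\mid x\}$ (with $s=0$ allowed). Now either some $x$ has $t\nmid x$, giving $s=0$ and hence a good $x$, or else $t\mid p$ and $t\mid a$; in the latter case $\gcd(t,N)$ divides $\gcd(p,N)=1$, so $t$- and $N$-divisibility are independent, and choosing $j$ to minimize the $t$-content of $x$ and then imposing $N^{s}\mid x$ for the resulting finite $s$ (possible since $\gcd(N,p)=1$) makes $x$ good. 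Therefore $\mcal Z(\mu_{2^\f})=\emptyset$, every $\tau$ satisfies $\mcal Z(\mu_\tau)=\emptyset$, and the template gives (ii) for all $\omega$.

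For (i) with $t=1$, sufficiency splits in two. If $\omega=2^\f$, then since $\delta_{N^{-k}B_2}$ is the $p$-dilate of $\delta_{N^{-k}B_1}$ and $\mu_{1^\f}$ is Lebesgue measure on $[0,1]$, $\mu_{2^\f}$ is normalized Lebesgue measure on $[0,p]$, which is spectral with spectrum $\frac1p\Z$ (this is also the content of Corollary \ref{Laba-Wang-result}). If the symbol $1$ occurs infinitely often, choose infinitely many $n$ with $\omega_{n+1}=1$ and extract $\sigma^{n_j}\omega\to\tau$ with $\tau_1=1$; for such $\tau$ and $\xi=a/p$ pick $j$ with $N\nmid a+jp$. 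Then the level-$1$ factor is type-$1$ (harmless at the non-integer $\xi+j$) and each type-$2$ factor at level $k\ge 2$ needs $N\mid a+jp$ to vanish, so $\wh{\mu_\tau}(\xi+j)\ne0$, giving $\mcal Z(\mu_\tau)=\emptyset$ and spectrality via Theorem \ref{main-spectrality}.

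The substantive direction is the necessity in (i): for $\omega=w2^\f$ with $w=\omega_1\cdots\omega_n$ containing a $1$, I must show $\mu_\omega$ is not spectral. Here the tail is the uniform measure on $[0,pN^{-n}]$, so $\mu_\omega$ is absolutely continuous with density $\tfrac1p\sum_{\mathbf b}\mathbbm{1}_{[x_{\mathbf b},\,x_{\mathbf b}+pN^{-n}]}$, where $x_{\mathbf b}=N^{-n}c$ ranges over $N^{-n}C$ with $C=\sum_{i=1}^n N^{n-i}B_{\omega_i}\sse\Z$. The plan is to invoke the classification that an absolutely continuous spectral measure on $\R$ has density a.e.\ equal to a constant times an indicator, and then contradict flatness: near $0$ the density equals $1/p$ (only the atom at $0$ contributes), so flatness would force the length-$p$ integer intervals $[c,c+p]$, $c\in C$, to be essentially disjoint. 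The combinatorial heart is to exclude this whenever $w$ has a $1$: taking $i_0$ to be the \emph{largest} index with $\omega_{i_0}=1$, the lower positions are type-$2$ and contribute $p\{0,1,\dots,N^{\,n-i_0}-1\}$ to $C$, while position $i_0$ contributes $N^{\,n-i_0}\{0,\dots,N-1\}$; as $\gcd(N,p)=1$, the element $N^{\,n-i_0}$ lies within $N^{\,n-i_0}\bmod p\in\{1,\dots,p-1\}$ of an element of $p\{0,\dots,N^{\,n-i_0}-1\}$, producing two points of $C$ at distance $\le p-1$ and hence a genuine overlap, forcing the density to equal $2/p$ on a set of positive measure. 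I expect this necessity argument---securing the absolutely-continuous-spectral-implies-flat input together with the overlap combinatorics---to be the main obstacle, with the valuation lemma $\mcal Z(\mu_{2^\f})=\emptyset$ of part (ii) the secondary technical point.
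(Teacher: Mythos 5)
Your proposal is correct and reaches all three parts of the theorem, but the route through part (ii) and through the sufficiency in part (i) is genuinely different from the paper's. The paper handles ``$1$ occurs infinitely often'' by splitting into two sub-cases (bounded gaps between $1$'s, treated via Corollary \ref{coro-finite-admissible-pair}, versus unbounded gaps, treated by accumulating to $\eta=12^\f$ and computing $\mcal{Z}(\mu_\eta)=\emptyset$ in Lemma \ref{lemma-mu-eta-emptyset}); your observation that \emph{any} accumulation point $\tau$ with $\tau_1=1$ already has $\mcal{Z}(\mu_\tau)=\emptyset$ (since a non-integral $\xi+j=x/p$ with $N\nmid x$ kills every factor) collapses these into one argument. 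More substantially, for $t\ge 2$ the paper only needs to treat the eventually-$2^\f$ tails, and does so via the measure-theoretic no-overlap Lemma \ref{lemma-no-overlap} fed into the Fourier-uniqueness criterion of Theorem \ref{theorem-integral-periodic-zero-empty-2}; you replace this entirely by the monotonicity $\mcal{Z}(\mu_\tau)\sse\mcal{Z}(\mu_{2^\f})$ plus an elementary arithmetic analysis of the zero set of $\wh{\mu}_{2^\f}$, which both unifies all $\omega$ at once and avoids the uniqueness theorem for Fourier coefficients. I checked the arithmetic: goodness of $x=a+jp$ is indeed equivalent to $t^sN^s\mid x$ with $s=\max\{k:t^kN^{k-1}\mid x\}$, and in the hard case $t\mid p$, $t\mid a$ one gets $\gcd(t,N)=1$, so by CRT one can find $x$ in the class $a\bmod p$ with minimal $t$-content $e_0$ and $N^{e_0}\mid x$ (if every such $x$ had $t^{e_0+1}\mid x$ one would get $t^{e_0+1}\mid\gcd(a,p)$, contradicting minimality); the argument visibly fails for $t=1$, as it must. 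The necessity argument in (i) is essentially the paper's: explicit density as overlapping length-$p$ intervals indexed by $C$, the Dutkay--Lai uniformity theorem \cite{Dutkay-Lai-2014}, and the overlap produced by the last occurrence of ``$1$'' together with $\gcd(N,p)=1$. Two points you should make explicit when writing this up: the identity $\mcal{O}(\wh{\mu}_\tau)=\bigcup_k (tN)^k\mcal{O}(M_{B_{\tau_k}})$ (the inclusion $\sse$ requires the tail products to be nonvanishing near $0$; this is the paper's \eqref{zero-set-random-convolution}), and the weak convergence $\mu_{\sigma^{n_j}\omega}\to\mu_\tau$, both of which you use silently but which are supplied by Lemma \ref{lemma-zero-set} and Theorem \ref{weak-convergence-theorem}.
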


The rest of paper is organized as follows.
In Section \ref{section-def-pre}, we introduce some definitions and some known results.
In Section \ref{section-infinite-convolution}, we give the proof of Theorem \ref{main-spectrality}.
In Section \ref{section-integral-periodic-zero}, we investigate the integral periodic zero set of Fourier transform and prove Theorem \ref{theorem-IPZS-empty-1} and \ref{theorem-IPZS-empty-2}.
The proofs of Theorem \ref{general-theorem} and \ref{special-case} are given in Section \ref{section-random-convolution} and \ref{section-special-case}, respectively.
Finally, we give some examples of spectral measures in Section \ref{section-example}.

\section{Definitions and preliminaries}\label{section-def-pre}

We first introduce the symbolic space.
The symbolic space $\Omega=\{1,2,\ldots,m\}^\N$, equipped with a metric $$\rho(\omega,\eta) = 2^{-\min\{k \ge 1: \omega_k \ne \eta_k\}}$$ for $\omega = (\omega_k)_{k=1}^\f,\eta = (\eta_k)_{k=1}^\f \in \Omega$,
is a compact metric space.
For a sequence $\set{ \omega(j) }$ in $\Omega$, we have that $\omega(j)$ converges to $\eta$ in $\Omega$ if and only if for $k\ge 1$ there exists $j_0 \ge 1$ such that for all $j \ge j_0$, $$\omega_1(j) \omega_2(j) \cdots \omega_k(j) = \eta_1 \eta_2 \cdots \eta_k.$$

For $i_1, i_2, \ldots, i_n \in \{1,2,\ldots,m\}$, we define the $n$-level \emph{cylinder} $$[i_1 i_2 \cdots i_n] = \set{(\omega_k)_{k=1}^\f \in \Omega: \omega_j = i_j \text{ for } j=1,2,\ldots, n}.$$
Given a probability vector $(p_1, p_2,\ldots, p_m)$, we define a probability measure $\PP$ on $\Omega$ by $$\PP([i_1 i_2 \cdots i_n]) = p_{i_1} p_{i_2} \cdots p_{i_n}$$
for all $i_1, i_2, \ldots, i_n \in \{1,2,\ldots,m\}$.
The probability measure $\PP$ is called the \emph{Bernoulli measure} associated with the probability vector $(p_1, p_2,\ldots, p_m)$.
A probability vector $(p_1, p_2,\ldots, p_m)$ is called \emph{positive} if $p_j >0$ for all $1\le j \le m$.

%We write $\mathcal{P}(\R)$ for the set of all Borel probability measures on $\R$.
For $\mu \in \mcal{P}(\R)$, the \emph{Fourier transform} of $\mu$ is given by
$$ \wh{\mu}(\xi) = \int_{\R} e^{-2\pi i \xi x} \D \mu(x),\; \xi \in \R. $$
It is easy to verify that $\wh{\mu}(\xi)$ is uniformly continuous on $\R$ and $\wh{\mu}(0) =1$.

For $\mu\in \mathcal{P}(\R)$, the \emph{support} of $\mu$ is defined to be the smallest closed subset with full measure, i.e., $$\mathrm{spt}(\mu) = \R \setminus \bigcup \left\{ U \subset \R: U \text{ is open, and } \mu(U)=0 \right\}.$$

Let $\mu,\mu_1,\mu_2,\ldots \in \mcal{P}(\R)$. We say that $\mu_n$ \emph{converges weakly} to $\mu$ if $$\lim_{n \to \f} \int_{\R} f(x) \D \mu_n(x) = \int_{\R} f(x) \D \mu(x)$$
for all $ f \in C_b(\R),$ where $C_b(\R)$ is the set of all bounded continuous functions on $\R$.
The weak convergence can be characterized by Fourier transform, see section 1.6 in \cite{Bogachev-2018} for details.
\begin{theorem}\label{weak-convergence-theorem}
  Let $\mu,\mu_1,\mu_2,\ldots \in \mcal{P}(\R)$.
  Then $\mu_n$ converges weakly to $\mu$ if and only if $\displaystyle \lim_{n \to \f} \wh{\mu}_n(\xi)=\wh{\mu}(\xi)$ for every $\xi \in \R$.
  Moreover, if $\mu_n$ converges weakly to $\mu$, then for $h>0$ we have that $\wh{\mu}_n(\xi)$ converges to $ \wh{\mu}(\xi)$ uniformly for $\xi \in [-h,h]$.
\end{theorem}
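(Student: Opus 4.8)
The plan is to split the biconditional into its two implications and handle the uniform-convergence addendum along with the easy implication, isolating the converse as the substantial part. For the forward direction, I would note that for each fixed $\xi \in \R$ the function $x \mapsto e^{-2\pi i \xi x}$ lies in $C_b(\R)$, so applying the definition of weak convergence to this single test function gives $\wh{\mu}_n(\xi) \to \wh{\mu}(\xi)$ at once. To upgrade pointwise to uniform convergence on $[-h,h]$, I would first recall that a weakly convergent sequence $\{\mu_n\}$ is tight: given $\ep>0$ there is $R>0$ with $\mu_n(\R \sm [-R,R]) < \ep$ for all $n$. Tightness forces $\{\wh{\mu}_n\}$ to be uniformly equicontinuous, since for all $\xi,\eta$,
$$|\wh{\mu}_n(\xi) - \wh{\mu}_n(\eta)| \le \int_{\R} |e^{-2\pi i \xi x} - e^{-2\pi i \eta x}|\D\mu_n(x) \le 2\pi |\xi - \eta| R + 2\ep,$$
uniformly in $n$. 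Combining uniform equicontinuity with the established pointwise convergence, a standard Arzelà--Ascoli argument (cover $[-h,h]$ by finitely many short intervals, control oscillation by equicontinuity, and control endpoints by pointwise convergence) yields uniform convergence on $[-h,h]$.

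For the converse, I would assume $\wh{\mu}_n(\xi) \to \wh{\mu}(\xi)$ pointwise, where $\mu \in \mcal{P}(\R)$ so $\wh{\mu}$ is continuous with $\wh{\mu}(0)=1$. The crux is an estimate bounding tail mass by the behaviour of the characteristic function near the origin, valid for every $T>0$:
$$\mu_n\Big(\big\{ x:\ |x| \ge \tfrac{1}{\pi T} \big\}\Big) \le \frac{1}{T}\int_{-T}^{T} \big(1 - \mathrm{Re}\,\wh{\mu}_n(\xi)\big)\D\xi.$$
This follows by Fubini from the identity $\frac{1}{2T}\int_{-T}^{T}\cos(2\pi \xi x)\D\xi = \frac{\sin(2\pi T x)}{2\pi T x}$ together with the elementary bound $1 - \frac{\sin u}{u} \ge \tfrac12$ for $|u|\ge 2$. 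To deduce uniform tightness, fix $\ep>0$; since $\mathrm{Re}\,\wh{\mu}$ is continuous at $0$ with value $1$, choose $T$ small so that the right-hand side with $\wh{\mu}$ in place of $\wh{\mu}_n$ is below $\ep/2$. The integrands $1 - \mathrm{Re}\,\wh{\mu}_n$ are bounded by $2$ on $[-T,T]$ and converge pointwise to $1 - \mathrm{Re}\,\wh{\mu}$, so by dominated convergence the right-hand side is below $\ep$ for all large $n$, giving $\mu_n(\{|x|\ge 1/(\pi T)\}) < \ep$; enlarging the compact set to absorb the finitely many remaining indices yields tightness of the whole family.

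Finally, I would invoke Prokhorov's theorem: by tightness, every subsequence of $\{\mu_n\}$ admits a further subsequence converging weakly to some $\nu \in \mcal{P}(\R)$. By the forward implication already proved, $\wh{\nu} = \lim \wh{\mu}_n = \wh{\mu}$, and since a Borel probability measure on $\R$ is determined by its Fourier transform (Fourier uniqueness, via the inversion formula), $\nu = \mu$. As every subsequence thus has a further subsequence converging weakly to the same limit $\mu$, a standard subsequence principle forces $\mu_n \to \mu$ weakly. The main obstacle is the converse direction, concentrated in the tightness estimate above: it is the single point where continuity of $\wh{\mu}$ at the origin is converted into uniform control of the tails, and once tightness is secured the remainder is a routine compactness-plus-uniqueness argument. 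The forward implication and the local uniform convergence, by contrast, are either immediate from the definition or a direct equicontinuity argument.
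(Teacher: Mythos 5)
Your proposal is correct, and it is essentially the argument the paper intends: the paper does not prove this statement at all, citing Subsection 6.3 of Chung's \emph{A Course in Probability Theory}, where this is the L\'evy continuity theorem proved by exactly your route --- the truncation inequality bounding tail mass by $\frac{1}{T}\int_{-T}^{T}(1-\mathrm{Re}\,\wh{\mu}_n(\xi))\D\xi$ to extract uniform tightness from continuity of $\wh{\mu}$ at the origin, then a compactness-plus-Fourier-uniqueness and subsequence argument, with local uniform convergence obtained from tightness-based equicontinuity. The only cosmetic difference is that Chung runs the compactness step through Helly selection and vague convergence rather than Prokhorov's theorem; your constants in the truncation inequality (via $1-\frac{\sin u}{u}\ge \frac12$ for $|u|\ge 2$, with $u=2\pi Tx$) check out under the paper's $e^{-2\pi i\xi x}$ normalization.
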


For $\mu,\nu \in \mcal{P}(\R)$, the \emph{convolution} $\mu *\nu$ is given by $$ \mu*\nu(B) = \int_{\R} \nu(B-x) \D \mu(x)= \int_{\R} \mu(B-y) \D \nu(y), $$
for every Borel subset $B\subset \R$.
Equivalently, the convolution $\mu *\nu$ is the unique Borel probability measure satisfying $$\int_{\R} f(x) \D \mu*\nu(x) = \int_{\R \times \R} f(x+y) \D \mu \times \nu(x,y),$$
for  all $ f \in C_b(\R).$
It is easy to check that $$\wh{\mu *\nu}(\xi) = \wh{\mu}(\xi) \wh{\nu}(\xi).$$
The following criterion is frequently employed to verify the spectrality of measures \cite{Jorgensen-Pedersen-1998,Li-Miao-Wang-2021}.
\begin{theorem} \label{spectrality-Q}
  Let $\mu \in \mathcal{P}(\R)$ and let $\Lambda \subset \R$ be a countable subset.
  Define $$Q(\xi) = \sum_{\lambda\in \Lambda} |\wh{\mu}(\xi + \lambda)|^2.$$
  Then the set $\Lambda$ is a spectrum of $\mu$ if and only if $Q(\xi)=1$ for all $\xi \in \R$.
\end{theorem}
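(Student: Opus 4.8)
The plan is to recast the quantity $Q(\xi)$ as the Bessel sum of a distinguished unit vector in $L^2(\mu)$ and then read off both directions from Bessel's inequality and Parseval's identity. Working with the Hermitian inner product $\la f,g\ra = \int_\R f\ol{g}\D\mu$, I would first record the two elementary computations that drive everything: every exponential is a unit vector, since $\|e_\xi\|_{L^2(\mu)}^2 = \int_\R 1\D\mu = 1$, and $\la e_\lambda, e_{\lambda'}\ra = \wh{\mu}(\lambda-\lambda')$. A short calculation also gives $\la e_{-\xi}, e_\lambda\ra = \wh{\mu}(\xi+\lambda)$, so that
$$Q(\xi) = \sum_{\lambda\in\Lambda} |\wh{\mu}(\xi+\lambda)|^2 = \sum_{\lambda\in\Lambda} |\la e_{-\xi}, e_\lambda\ra|^2$$
is exactly the sum of squared Fourier coefficients of the unit vector $e_{-\xi}$ against the family $\set{e_\lambda : \lambda\in\Lambda}$.

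For the forward implication, if $\Lambda$ is a spectrum then $\set{e_\lambda}$ is an orthonormal basis of $L^2(\mu)$, so Parseval's identity applies to every vector, in particular to $e_{-\xi}$, yielding $Q(\xi) = \|e_{-\xi}\|_{L^2(\mu)}^2 = 1$ for all $\xi\in\R$. For the converse, suppose $Q(\xi)=1$ identically. I would first extract orthonormality by evaluating $Q$ at the special points $-\lambda_0$ with $\lambda_0\in\Lambda$: here $Q(-\lambda_0) = \sum_{\lambda\in\Lambda}|\wh{\mu}(\lambda-\lambda_0)|^2 = 1$, and since the single term $\lambda=\lambda_0$ already contributes $|\wh{\mu}(0)|^2 = 1$, every remaining term must vanish. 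This forces $\wh{\mu}(\lambda-\lambda_0)=0$ for all $\lambda\ne\lambda_0$, which is precisely the orthonormality of $\set{e_\lambda}$.

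It then remains to prove completeness. Let $V\sse L^2(\mu)$ be the closed linear span of $\set{e_\lambda}$ and $P_V$ the orthogonal projection onto $V$. Since $\set{e_\lambda}$ is orthonormal, $\|P_V e_{-\xi}\|^2 = \sum_{\lambda}|\la e_{-\xi},e_\lambda\ra|^2 = Q(\xi) = 1 = \|e_{-\xi}\|^2$; as $P_V$ is a contraction, the equality $\|P_V e_{-\xi}\| = \|e_{-\xi}\|$ forces $e_{-\xi}\in V$ for every $\xi\in\R$. The decisive final step is to show that $\set{e_\xi : \xi\in\R}$ already spans a dense subspace of $L^2(\mu)$, for then $V$ contains a dense set and, being closed, equals $L^2(\mu)$, so $\set{e_\lambda}$ is an orthonormal basis. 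I would establish this density by duality: if $f\in L^2(\mu)\sse L^1(\mu)$ (the inclusion holds because $\mu$ is finite) satisfies $\la f, e_\xi\ra = 0$ for all $\xi$, then the finite complex measure $f\D\mu$ has Fourier transform identically zero, whence $f\D\mu = 0$ by uniqueness of the Fourier transform of finite measures, so $f=0$ in $L^2(\mu)$. The main obstacle is exactly this completeness half---both the observation that $Q\equiv 1$ pushes every exponential $e_{-\xi}$ into the span $V$ and the justification that the full family of exponentials is total in $L^2(\mu)$; by contrast, orthonormality and the forward direction are immediate consequences of Bessel's inequality and Parseval's identity.
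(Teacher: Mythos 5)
Your proof is correct. The paper itself gives no proof of this theorem---it is quoted as a known criterion from the cited work of Jorgensen and Pedersen---and your argument (recognizing $Q(\xi)$ as the Bessel sum of the unit vector $e_{-\xi}$, extracting orthonormality by evaluating at $\xi=-\lambda_0$, and getting completeness from the equality case of the projection inequality together with totality of the full exponential family via uniqueness of Fourier--Stieltjes transforms) is exactly the standard proof of this result. The only nitpick is that $\la e_{-\xi}, e_\lambda\ra$ equals $\overline{\wh{\mu}(\xi+\lambda)}$ rather than $\wh{\mu}(\xi+\lambda)$ with the paper's sign conventions, which is immaterial since only $|\cdot|^2$ enters.
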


The spectrality of measures is invariant under linear transformations.
For $a,b \in \R$ with $a\ne 0$, we define $T_{a,b}:\R \to \R$ by
\begin{equation}\label{linear-transformation}
  T_{a,b}(x) = ax+b.
\end{equation}

\begin{lemma}\label{spectrality-invariant}
  If $\mu \in \mathcal{P}(\R)$ is a spectral measure with a spectrum $\Lambda$,
  then the measure $\mu \circ T_{a,b}^{-1}$ is a spectral measure with a spectrum $\Lambda/a$ for $a,b \in \R$ with $a\ne 0$.
\end{lemma}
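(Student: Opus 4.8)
The plan is to verify the spectrum directly through the criterion in Theorem~\ref{spectrality-Q}, which converts the orthonormal-basis condition into the pointwise identity $Q\equiv 1$. Write $\nu = \mu\circ T_{a,b}^{-1}$. The first step is to record how the Fourier transform behaves under the affine map $T_{a,b}(x)=ax+b$. Using the pushforward change of variables $\int_\R f\,\D(\mu\circ T_{a,b}^{-1}) = \int_\R f\circ T_{a,b}\,\D\mu$ with $f(x)=e^{-2\pi i\xi x}$, I would obtain
\begin{equation*}
  \wh{\nu}(\xi) = \int_\R e^{-2\pi i \xi(ax+b)}\,\D\mu(x) = e^{-2\pi i b\xi}\,\wh{\mu}(a\xi).
\end{equation*}
The translation parameter $b$ thus contributes only the unimodular factor $e^{-2\pi i b\xi}$, while the dilation rescales the frequency variable by $a$.

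The second step is to feed the candidate spectrum $\Lambda/a = \set{\lambda/a : \lambda\in\Lambda}$ into the $Q$-function associated with $\nu$. For each $\lambda\in\Lambda$ one has $\wh{\nu}(\xi+\lambda/a) = e^{-2\pi i b(\xi+\lambda/a)}\,\wh{\mu}(a\xi+\lambda)$, so that $|\wh{\nu}(\xi+\lambda/a)|^2 = |\wh{\mu}(a\xi+\lambda)|^2$, the exponential factor having modulus one. Summing over $\Lambda$ gives
\begin{equation*}
  Q_\nu(\xi) = \sum_{\lambda\in\Lambda} \bigl|\wh{\nu}(\xi+\lambda/a)\bigr|^2 = \sum_{\lambda\in\Lambda}\bigl|\wh{\mu}(a\xi+\lambda)\bigr|^2 = Q_\mu(a\xi).
\end{equation*}

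For the final step, since $\Lambda$ is a spectrum of $\mu$, Theorem~\ref{spectrality-Q} gives $Q_\mu(\eta)=1$ for every $\eta\in\R$; as $a\ne 0$, the point $a\xi$ ranges over all of $\R$ when $\xi$ does, so $Q_\nu(\xi)=Q_\mu(a\xi)=1$ for all $\xi\in\R$. Applying the converse direction of the criterion to $\nu$ then yields that $\Lambda/a$ is a spectrum of $\nu=\mu\circ T_{a,b}^{-1}$, which completes the argument.

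I expect no genuine obstacle here: the only point requiring care is the bookkeeping of the affine change of variables, and the key structural observation is that the translation $b$ enters $\wh{\nu}$ solely through a modulus-one multiplier and therefore leaves $Q_\nu$ untouched, whereas the dilation $a$ is compensated exactly by passing from $\Lambda$ to $\Lambda/a$. Equivalently, one could bypass the $Q$-criterion and argue that $f\mapsto f\circ T_{a,b}$ is a unitary map $L^2(\nu)\to L^2(\mu)$ sending each $e_{\lambda/a}$ to a unimodular multiple of $e_\lambda$, hence carrying the orthonormal basis $\set{e_\lambda : \lambda\in\Lambda}$ back to the orthonormal basis $\set{e_{\lambda/a} : \lambda\in\Lambda}$ of $L^2(\nu)$.
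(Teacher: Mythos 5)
Your proposal is correct and follows essentially the same route as the paper: both compute $\wh{\nu}(\xi)=e^{-2\pi i b\xi}\wh{\mu}(a\xi)$ and then verify $Q_\nu(\xi)=\sum_{\lambda\in\Lambda}|\wh{\mu}(a\xi+\lambda)|^2=1$ via Theorem~\ref{spectrality-Q}. The bookkeeping of the unimodular factor and the rescaled spectrum $\Lambda/a$ matches the paper's argument exactly.
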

\begin{proof}
  Write $\nu = \mu \circ T_{a,b}^{-1}$ and $\Lambda' = \Lambda/a$.
  Obviously, we have that $$\wh{\nu}(\xi) = e^{-2\pi i b \xi}\wh{\mu}(a\xi).$$
  Since $\Lambda$ is a spectrum of $\mu$, by Theorem \ref{spectrality-Q}, we have that
  \begin{align*}
    Q(\xi) & = \sum_{\lambda \in \Lambda'} |\wh{\nu}(\xi+\lambda)|^2 \\
     & = \sum_{\lambda \in \Lambda'} |e^{-2\pi i b (\xi+\lambda)}\wh{\mu}(a\xi+a\lambda)|^2 \\
     & = \sum_{\lambda \in \Lambda'} |\wh{\mu}(a\xi+a\lambda)|^2 \\
     & = \sum_{\lambda \in \Lambda} |\wh{\mu}(a\xi+\lambda)|^2 \\
     & = 1.
  \end{align*}
  It follows from Theorem \ref{spectrality-Q} that $\Lambda'$ is a spectrum of $\nu$.
\end{proof}

In the end, we list some simple properties of admissible pairs, which are needed in our proofs.
\begin{lemma}\label{admissible-pair-lemma}
  Suppose that $(N,B)$ is an admissible pair, and $L\subset \Z$ is a spectrum of the discrete measure $\delta_{N^{-1} B}$.

  {\rm(i)} The elements in $L$ are distinct module $N$.

  {\rm(ii)} If $\widetilde{L} \equiv L \pmod{N}$, then $\widetilde{L}$ is also a spectrum of $\delta_{N^{-1} B}$.

  {\rm(iii)} For $b \in \Z$, $(N, B-b)$ is an admissible pair.

  {\rm(iv)} If $d \mid \mathrm{gcd}(B)$, then $( N, \frac{1}{d} B)$ is an admissible pair.

  {\rm(v)} For $t\in \Z$ with $t\ne 0$, $(tN, B)$ is an admissible pair
\end{lemma}
\begin{proof}
  (i) It follows from the fact that the matrix in \eqref{unitary-matrix} is unitary.

  (ii) Since $\widetilde{L} \equiv L \pmod{N}$, we have that the matrix
  $$ \left( \frac{1}{\sqrt{\# B}} e^{-2\pi i \frac{b\tilde{\ell}}{N}} \right)_{b \in B, \tilde{\ell} \in \widetilde{L}} = \left( \frac{1}{\sqrt{\# B}} e^{-2\pi i \frac{b\ell}{N}} \right)_{b \in B, \ell \in L} $$
  is unitary. It follows that $\widetilde{L}$ is also a spectrum of $\delta_{N^{-1} B}$.

  Note that $$ \delta_{N^{-1}(B-b)} = \delta_{N^{-1} B} \circ T_{1,-b/N}^{-1},\; \delta_{N^{-1}(\frac{1}{d} B)} = \delta_{N^{-1} B} \circ T_{1/d,0}^{-1},\; \delta_{(tN)^{-1} B} = \delta_{N^{-1} B} \circ T_{1/t,0}^{-1}.$$
  (iii), (iv) and (v) follow immediately from Lemma \ref{spectrality-invariant}.
\end{proof}

\section{Spectrality of infinite convolutions}\label{section-infinite-convolution}

The equi-positivity was first raised by An, Fu and Lai \cite{An-Fu-Lai-2019} to study the spectrality of infinite convolutions, and the authors \cite{Li-Miao-Wang-2021} generalized it into the following form.
\begin{definition}\label{def-equipositive}
  We call $\Phi \subset \mcal{P}(\R)$ an equi-positive family if there exists $\ep>0$ and $\gamma>0$ such that for $x\in [0,1)$ and $\mu\in \Phi$ there exists an integer $k_{x,\mu} \in \Z$ such that
  $$ |\wh{\mu}(x+y+k_{x,\mu})| \ge \ep,$$
  for all $ |y| <\gamma$, where $k_{x,\mu} =0$ for $x=0$.
\end{definition}

The following theorem is very useful to study the spectrality of infinite convolutions, see \cite{Li-Miao-Wang-2021} for the proof. It was proved in \cite{An-Fu-Lai-2019} under the no-overlap condition.

\begin{theorem}\label{spectrality-1}
  Given a sequence of admissible pairs $\{(N_k,B_k)\}_{k=1}^\f$, suppose that the infinite convolution $\mu$ defined by \eqref{mu-infinite-convolution} exists, and the sequence $\{\nu_{>n}\}$ is defined by \eqref{nu-large-than-n}.
  If there exists a subsequence $\{\nu_{>n_j}\}$ which is an equi-positive family, then $\mu$ is a spectral measure with a spectrum in $\Z$.
\end{theorem}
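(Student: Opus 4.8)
The plan is to exhibit an explicit candidate spectrum and verify the criterion of Theorem~\ref{spectrality-Q}. Write $M_n = N_1 N_2 \cdots N_n$ (with $M_0 = 1$). For each $k$ fix a spectrum $L_k \sse \Z$ of $\delta_{N_k^{-1} B_k}$; by Lemma~\ref{admissible-pair-lemma}(ii) I may assume $0 \in L_k$ and $L_k \sse \{0,1,\dots,N_k-1\}$. Set
\begin{equation*}
  \Lambda_n = L_1 + N_1 L_2 + \cdots + M_{n-1} L_n = \Big\{ \textstyle\sum_{k=1}^n M_{k-1}\ell_k : \ell_k \in L_k \Big\}, \qquad \Lambda = \bigcup_{n\ge 1} \Lambda_n.
\end{equation*}
Since $0 \in L_k$, the sets $\Lambda_n$ are nested and $\Lambda_n \sse \{0,\dots,M_n-1\}$. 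A standard induction on the Hadamard-triple structure shows that $\Lambda_n$ is a spectrum of the finite convolution $\mu_n$: using $\wh{\mu}_n(\xi) = \wh{\mu}_{n-1}(\xi)\, \wh{\delta}_{N_n^{-1}B_n}(\xi/M_{n-1})$ together with the unitarity in~\eqref{unitary-matrix}, one checks $\sum_{\lambda\in\Lambda_n}|\wh{\mu}_n(\xi+\lambda)|^2 \equiv 1$. In particular $\{e_\lambda : \lambda \in \Lambda\}$ is orthonormal in $L^2(\mu)$, so by Bessel's inequality the function $Q(\xi) = \sum_{\lambda\in\Lambda}|\wh{\mu}(\xi+\lambda)|^2$ satisfies $Q \le 1$, and $Q_n(\xi) := \sum_{\lambda\in\Lambda_n}|\wh{\mu}(\xi+\lambda)|^2$ increases to $Q(\xi)$. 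By Theorem~\ref{spectrality-Q} it then remains only to prove $Q \equiv 1$.

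Second, I record the identity that drives everything. Factoring $\wh{\mu} = \wh{\mu}_n \,\wh{\mu}_{>n}$ and using $\sum_{\lambda\in\Lambda_n}|\wh{\mu}_n(\xi+\lambda)|^2 = 1$,
\begin{equation*}
  1 - Q_n(\xi) = \sum_{\lambda\in\Lambda_n} |\wh{\mu}_n(\xi+\lambda)|^2 \big( 1 - |\wh{\mu}_{>n}(\xi+\lambda)|^2 \big).
\end{equation*}
By~\eqref{nu-large-than-n} one has $\wh{\mu}_{>n}(\eta) = \wh{\nu}_{>n}(\eta/M_n)$, so $|\wh{\mu}_{>n}(\xi+\lambda)| = |\wh{\nu}_{>n}((\xi+\lambda)/M_n)|$; and because $\Lambda_n \sse [0,M_n)$, the evaluation points $(\xi+\lambda)/M_n$ all lie in the bounded window $[\xi/M_n,\,\xi/M_n + 1)$. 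Thus the question of whether $Q_n(\xi)\to 1$ is reduced to the behaviour of the rescaled tails $\nu_{>n}$ on a fixed bounded interval, which is exactly the regime controlled by equi-positivity: along the subsequence $\{n_j\}$ there are $\ep,\delta > 0$ such that for every residue $x\in[0,1)$ some integer translate $x+k$ satisfies $|\wh{\nu}_{>n_j}| \ge \ep$ on the $\delta$-ball about $x+k$, with the normalization $k=0$ at $x=0$.

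The heart of the argument, and the step I expect to be the main obstacle, is to promote this uniform positivity to $\lim_j Q_{n_j}(\xi) = 1$ for every $\xi$ — which gives $Q \equiv 1$, since $Q_n$ increases to $Q$ and hence $Q = \lim_j Q_{n_j}$. The difficulty is genuinely quantitative rather than soft: a single fixed set $\Lambda$ must force $Q(\xi)=1$ simultaneously for all $\xi$, whereas equi-positivity supplies only a lower bound $\ep$ (not a value near $1$) and only along one integer translate per residue class, which need not be the translate occupied by the representatives $\Lambda_n \sse [0,M_n)$. I stress that a bare positive lower bound $Q \ge \ep^2$ cannot be upgraded by any generic principle (removing a single frequency from a spectrum already breaks such an implication), so the proof must exploit that each $\Lambda_{n_j}$ is a \emph{full} spectrum of $\mu_{n_j}$. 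Concretely, I would show that the spectral weights $|\wh{\mu}_{n_j}(\xi+\lambda)|^2$, which form a partition of unity over $\lambda \in \Lambda_{n_j}$, asymptotically concentrate on the ``good'' frequencies — those with $(\xi+\lambda)/M_{n_j}$ in the $\ep$-lower-bound set — so that the bad frequencies carry vanishing weight. I expect to establish this by an inductive, level-by-level selection of the digit representatives $L_k \pmod{N_k}$ via Lemma~\ref{admissible-pair-lemma}(ii), using equi-positivity at each stage to guarantee that passing from level $n$ to level $n+1$ recaptures a definite fraction of the still-missing mass uniformly in $\xi$; iterating then drives the missing mass to $0$. Marrying the partition of unity from ``$\Lambda_{n_j}$ is a spectrum of $\mu_{n_j}$'' to the equi-positive lower bound, uniformly in $\xi$, is where the bulk of the technical work lies.

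Finally, a few supporting points need attention. I would first verify tightness of $\{\nu_{>n_j}\}$, so that no mass escapes to infinity and the $\delta$-ball lower bounds can be invoked uniformly in $j$ on the relevant window; this also underlies passing, if convenient, to a weakly convergent sub-subsequence. The reductions $0 \in L_k$ and $L_k \sse \{0,\dots,N_k-1\}$ made at the outset are harmless by Lemma~\ref{admissible-pair-lemma}(ii). Once $Q \equiv 1$ is established, Theorem~\ref{spectrality-Q} yields that $\Lambda$ is a spectrum of $\mu$, so $\mu$ is a spectral measure, completing the proof.
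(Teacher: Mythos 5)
The paper does not actually prove Theorem \ref{spectrality-1}: it explicitly defers to \cite{Li-Miao-Wang-2021} for the proof (and notes that \cite{An-Fu-Lai-2019} proved it under the no-overlap condition), so your attempt must be measured against that argument. Your opening moves are sound and match the standard setup: the nested towers $\Lambda_n$ are spectra of the finite convolutions $\mu_n$, hence $\Lambda$ is mutually orthogonal for $\mu$; Bessel gives $Q \le 1$; the defect identity $1 - Q_n(\xi) = \sum_{\lambda\in\Lambda_n}|\wh{\mu}_n(\xi+\lambda)|^2(1-|\wh{\mu}_{>n}(\xi+\lambda)|^2)$ is correct; and the rescaling to $\nu_{>n}$ on a bounded window is the right reduction. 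You also correctly diagnose that a bare bound $Q \ge \ep^2$ cannot be upgraded by soft arguments. But the proof stops precisely where the content of the theorem lies: the passage from equi-positivity to completeness is announced (``I would show\dots'', ``I expect to establish\dots'') rather than carried out, so there is a genuine gap, not a routine verification left to the reader.

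Moreover, the route you sketch for that step is structurally inadequate. Your candidate spectrum is frozen at the outset with $\Lambda_n \sse [0, N_1\cdots N_n)$, and the integers $k_{x,\mu}$ supplied by Definition \ref{def-equipositive} --- which are the entire point of the equi-positivity hypothesis --- are never used in the construction. The proof in \cite{An-Fu-Lai-2019,Li-Miao-Wang-2021} builds $\Lambda$ \emph{adaptively} along the subsequence $\{n_j\}$: each newly formed element $\lambda$ is corrected by an \emph{element-dependent} integer multiple of $M_{n_j} = N_1 N_2 \cdots N_{n_j}$ (harmless, since $\wh{\mu}_{n_j}$ is $M_{n_j}$-periodic, so $\Lambda_j$ remains a spectrum of $\mu_{n_j}$), the multiple being $k_{x,\mu}$ for $x$ the residue of $\lambda/M_{n_j}$; this forces $|\wh{\nu}_{>n_j}((\xi+\lambda)/M_{n_j})| \ge \ep$ simultaneously for all $\lambda \in \Lambda_j$ once $|\xi|/M_{n_j} < \delta$, and the normalization $k_{x,\mu}=0$ at $x=0$ is exactly what keeps the towers nested and $0 \in \Lambda$. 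Your proposed substitute --- ``level-by-level selection of the digit representatives $L_k \pmod{N_k}$'' via Lemma \ref{admissible-pair-lemma}(ii) --- cannot produce these corrections: changing the representative of a digit value at level $k$ shifts by the \emph{same} amount $M_k t$ every tower element carrying that digit, whereas the needed shift depends on the element's full residue $\lambda/M_{n_j} \bmod 1$, hence on all earlier digits; you have only $\sum_k \# L_k$ representative choices against $\prod_k \# L_k$ elements. Finally, even with the good frequencies in hand, your ``recapture a definite fraction of the missing mass'' mechanism is asserted without any engine: equi-positivity bounds $|\wh{\nu}_{>n_j}|$ below by $\ep$ but does not push it toward $1$, so the defect identity alone cannot give $1-Q_{n_j}\to 0$; the endgame upgrading the uniform lower bound on $Q$ to $Q \equiv 1$ for the adaptively shifted $\Lambda$ is itself a substantive step of the cited proofs and is absent here.
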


\begin{proof}[Proof of Theorem \ref{main-spectrality}]
  By Theorem \ref{spectrality-1}, it suffices to show that there exists $j_0 \ge 1$ such that the family $\{\nu_{> n_j}\}_{j=j_0}^\f$ is equi-positive.

  Since $\mathcal{Z}(\nu)=\emptyset$, for each $x\in [0,1]$, there exists $k_x\in \Z$ such that $\wh{\nu}(x+k_x) \ne 0$.
  Since $\widehat{\nu}(\xi)$ is continuous, there exists $\ep_x>0$ and $\gamma_x >0$ such that
  \begin{equation}\label{positive-1}
    |\wh{\nu}(x+k_x+y)|\ge \ep_x
  \end{equation}
  for all $|y|<\gamma_x$.
  Note that $$[0,1] \subset \bigcup_{x\in[0,1]} (x-\gamma_x/2, x+\gamma_x/2). $$
  By the compactness of $[0,1]$, there exist finitely many $x_1, x_2, \ldots, x_q \in [0,1]$ such that
  \begin{equation}\label{finite-cover}
    [0,1] \subset \bigcup_{\ell=1}^q (x_\ell-\gamma_{x_\ell}/2, x_\ell +\gamma_{x_\ell}/2).
  \end{equation}
  Since $\wh{\nu}(0) =1$ and $\wh{\nu}(\xi)$ is continuous, there exists $\gamma_0>0$ such that
  \begin{equation}\label{positive-2}
    |\wh{\nu}(y)| \ge 1/2
  \end{equation}
  for all $|y| <\gamma_0$.

  Let $\ep = \min\set{ 1/4, \ep_{x_1}/2, \ep_{x_2}/2, \ldots, \ep_{x_q}/2 }$ and $\gamma= \min \set{ \gamma_0, \gamma_{x_1}/2, \gamma_{x_2}/2, \cdots, \gamma_{x_q}/2 }$.
  Let $h = 1 + \gamma+\max\set{|k_{x_1}|, |k_{x_2}|, \ldots, |k_{x_q}|}$.
  Since $\{ \nu_{>n_j} \}$ converges weakly to $\nu$, by Theorem \ref{weak-convergence-theorem}, we have that $\wh{\nu}_{>n_j}(\xi)$ converges to $\wh{\nu}(\xi)$ uniformly on $[-h, h]$.
  Thus, there exists $j_0 \ge 1$ such that
  \begin{equation}\label{converge-uniformly}
    |\wh{\nu}_{>n_j}(\xi) - \wh{\nu}(\xi)| < \ep
  \end{equation}
  for all $j \ge j_0$ and all $\xi \in [-h,h]$.

  For each $x\in (0,1)$, by \eqref{finite-cover}, we may find $1\le \ell \le q$ such that $|x-x_\ell|<\gamma_{x_\ell}/2$.
  For $j \ge j_0$ and $|y|<\gamma$, noting that $|x+k_{x_\ell} +y|<h$, it follows from \eqref{converge-uniformly} that $$|\wh{\nu}_{>n_j}(x+k_{x_\ell} + y)| \ge |\wh{\nu}(x+k_{x_\ell} +y)| -\ep. $$
  Since $|x-x_\ell +y| < \gamma_{x_\ell}/2 +\gamma \le \gamma_{x_\ell} $, by \eqref{positive-1}, we have that $$|\wh{\nu}(x+k_{x_\ell} +y)|= |\wh{\nu}(x_\ell+k_{x_\ell} +x-x_\ell+y)|\ge \ep_{x_\ell} \ge 2\ep.$$
  Thus, for $j \ge j_0$ and $|y|<\gamma$, $$|\wh{\nu}_{>n_j}(x+k_{x_\ell} + y)| \ge \ep. $$
  For $x=0$, it follows from \eqref{converge-uniformly} and \eqref{positive-2} that for $j \ge j_0$ and for $|y| <\gamma$, $$|\wh{\nu}_{>n_j}(y)| \ge |\wh{\nu}(y)|-\ep \ge 1/4 \ge \ep.$$
  Therefore, we conclude that the family $\{ \nu_{> n_j} \}_{j=j_0}^\f$ is equi-positive.
\end{proof}

\section{The integral periodic zero set}\label{section-integral-periodic-zero}

In this section, we study the integral periodic zero set of Fourier transform.
Let $\mathbb{T} = \R/\Z$, and write $\mathcal{M}(\mathbb T)$ for the set of all complex Borel measures on $\mathbb{T}$. The following is  the uniqueness theorem of Fourier coefficients in classical harmonic analysis.

\begin{theorem}\cite[Corollary~7.1]{Katznelson-2004}\label{uniqueness-theorem}
  Let $\nu \in \mathcal{M}(\mathbb T)$.
  If the Fourier coefficients $$\widehat{\nu}(k) = \int_{\mathbb{T}} e^{-2\pi i k x} \D\nu(x) =0$$ for all $k \in \Z$, then $\nu = 0$.
\end{theorem}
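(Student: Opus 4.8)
The plan is to prove the uniqueness theorem via the density of trigonometric polynomials in $C(\mathbb{T})$ combined with the Riesz representation theorem. First I would reformulate the hypothesis: saying $\wh{\nu}(k)=0$ for all $k\in\Z$ is exactly saying that $\int_{\mathbb{T}} P \D \nu = 0$ for every trigonometric polynomial $P(x)=\sum_{|k|\le n} c_k e^{-2\pi i k x}$, since each such $P$ is a finite linear combination of the characters $x\mapsto e^{-2\pi i k x}$ and integration against $\nu$ is linear.

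Next I would upgrade this from trigonometric polynomials to all continuous functions. The key input is the Stone--Weierstrass theorem: the trigonometric polynomials form a subalgebra of $C(\mathbb{T})$ that contains the constants, separates the points of $\mathbb{T}$, and is closed under complex conjugation (because $\overline{e^{-2\pi i k x}}=e^{2\pi i k x}$ is again a character). Hence trigonometric polynomials are uniformly dense in $C(\mathbb{T})$. Given $f\in C(\mathbb{T})$, I choose trigonometric polynomials $P_n$ with $\|f-P_n\|_\infty\to 0$, and writing $\|\nu\|$ for the total variation of $\nu$, the estimate
$$ \left| \int_{\mathbb{T}} f \D \nu - \int_{\mathbb{T}} P_n \D \nu \right| \le \|f - P_n\|_\infty \cdot \|\nu\| \longrightarrow 0, $$
together with $\int_{\mathbb{T}} P_n \D \nu = 0$, forces $\int_{\mathbb{T}} f \D \nu = 0$ for every $f\in C(\mathbb{T})$.

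Finally I would invoke the uniqueness part of the Riesz representation theorem. Since $\mathbb{T}$ is a compact metric space, a complex Borel measure is determined by the bounded linear functional it induces on $C(\mathbb{T})$; here that functional is identically zero, so it corresponds to the zero measure, giving $\nu=0$. As an alternative to Stone--Weierstrass, the density step can be made constructive through Fej\'er's theorem, approximating $f$ uniformly by the Ces\`aro means of its Fourier series, which are themselves trigonometric polynomials; this is essentially the route taken in the classical references.

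The main obstacle here is not any computation but cleanly securing the classical inputs: one must verify that the algebra of trigonometric polynomials is \emph{self-adjoint}, which is precisely why both $e^{-2\pi i k x}$ and its conjugate are needed, and one must note that the finiteness of $\|\nu\|$ --- automatic for a complex Borel measure on a compact space --- is what legitimizes exchanging the limit with the integral. The appeal to the uniqueness statement in Riesz representation is the step that actually converts ``$\nu$ annihilates every continuous function'' into ``$\nu$ is the zero measure.''
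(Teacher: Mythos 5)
Your proof is correct. The paper does not prove this statement at all --- it is imported verbatim from Katznelson \cite[p.~36]{Katznelson-2004} --- and your argument (vanishing against trigonometric polynomials, uniform density in $C(\mathbb{T})$ via Stone--Weierstrass or, as you note, Fej\'er's theorem, then the uniqueness part of the Riesz representation theorem, whose regularity hypothesis is automatic for complex Borel measures on the compact metric space $\mathbb{T}$) is precisely the classical proof given in that reference.
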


\begin{proof}[Proof of Theorem \ref{theorem-IPZS-empty-1}]
  Since $\nu(E) >0$, there exists $k_0 \in \Z$ such that $\nu\big( E \cap [k_0,k_0 +1) \big)>0$.
  Let $\widetilde{E}=  (E-k_0)\cap [0,1)$ and $\widetilde{\nu} = \nu*\delta_{\{-k_0\}}$.
  For any Borel subset $F \subset \R$, we have that $$\widetilde{\nu}(F) = \nu*\delta_{\{-k_0\}}(F) = \nu(F+k_0). $$
  It follows that $\widetilde{\nu}\big( \widetilde{E} \big) = \nu\big( E \cap [k_0,k_0 +1) \big)>0$, and
  $$\widetilde{\nu}\big( \widetilde{E} +k \big) = \nu\big((E +k)\cap [k+k_0,k+k_0+1)\big)=0$$ for all $k \in \Z \setminus \{0\}$.
  Noting that $\widehat{\widetilde{\nu}}(\xi) = e^{2 \pi i k_0 \xi} \widehat{\nu}(\xi)$, we have that $\mathcal{Z} ( \widetilde{\nu} ) = \mathcal{Z}(\nu)$.
  Since $\widetilde{E} \subset [0,1)$, we can assume that $E \subset [0,1)$.
  Recall that a Borel probability measure on $\R$ is regular, see \cite[Theorem 2.18]{Rudin-1987}.
  We have that $\nu(E) = \sup\set{\nu(K): K \subset E \text{ is compact}}$.
  Therefore, in the following, we assume that $E \subset [0,1)$ is compact.

  For $\xi \in \R$, we define a complex measure $\nu_\xi$ on $\R$ by $$\frac{\D \nu_\xi}{\D \nu} = e^{-2\pi i\xi x}. $$
  Consider the natural homomorphism $\pi: \R \to \mathbb{T}$, and let $\rho_\xi = \nu_\xi \circ \pi^{-1}$ be the image measure on $\mathbb{T}$ of $\nu_\xi$ by $\pi$, i.e., for each Borel subset $F \subset \mathbb{T}$, $$\rho_\xi(F) = \nu_\xi(F+\Z) = \sum_{k\in \Z}\nu_\xi(F+k).$$

  Assume that $\xi \in \mathcal{Z}(\nu)$.
  For each $k \in \Z$, we have that
  \begin{align*}
    \widehat{\rho}_\xi(k)
    &=\int_{\mathbb{T}} e^{-2\pi i k x} \D \nu_\xi\circ \pi^{-1}(x)  \\
    & = \int_{\R} e^{-2\pi i k \pi(x)} \D \nu_\xi(x) \\
    & = \int_{\R} e^{-2\pi i k x} \D \nu_\xi(x) \\
    & = \int_{\R} e^{-2\pi i k x}\cdot e^{-2\pi i \xi x} \D \nu(x) \\
    & = \widehat{\nu}(\xi+k) =0,
  \end{align*}
  where the last equality follows from $\xi \in \mathcal{Z}(\nu)$.
  By Theorem \ref{uniqueness-theorem}, we conclude that $\rho_\xi =0$.
  It follows that
  \begin{align*}
    0 = \rho_\xi(E) = \nu_\xi(E+\Z) = \int_{E + \Z} e^{-2\pi i \xi x} \D\nu(x).
  \end{align*}
  Since $\nu\big( E + k \big) =0$ for all $k \in \Z\setminus\{0\}$, we obtain that $$ \int_E e^{-2\pi i \xi x} \D\nu(x) =0.$$
  Let $\tau$ be the normalized measure of $\nu$ on $E$, i.e., $\tau(\;\cdot\;) = \frac{1}{\nu(E)} \nu(\;\cdot\; \cap E)$.
  Then we have that $$\widehat{\tau}(\xi) = \frac{1}{\nu(E)} \int_E e^{-2\pi i \xi x} \D\nu(x) =0.$$
  Therefore, we obtain that $\widehat{\tau}(\xi)=0$ for all $\xi \in \mathcal{Z}(\nu)$.

  Suppose that $\mathcal{Z}(\nu) \ne \emptyset$ and take $\xi \in \mathcal{Z}(\nu)$.
  Since $\xi+k \in \mathcal{Z}(\nu)$ for all $k \in \Z$, we have that for all $k \in \Z$, $$\widehat{\tau}(\xi+k) =0.$$
  Consider the complex measure $\tau_\xi$ defined by
  \begin{equation}\label{nu-xi}
    \frac{\D \tau_\xi}{\D \tau} = e^{-2\pi i \xi x}.
  \end{equation}
  Since $\mathrm{spt}(\tau) \subset [0,1)$, $\tau_\xi$ can be viewed as a complex measure on $\mathbb{T}$.
  Moreover, the Fourier coefficients
  $$ \widehat{\tau}_\xi(k) = \widehat{\tau}(\xi+k)=0 $$ for all $k \in \Z$.
  By Theorem \ref{uniqueness-theorem}, we have that $\tau_\xi=0$.
  But, by \eqref{nu-xi} and Theorem 6.13 in \cite{Rudin-1987}, we have that the total variation $|\tau_\xi| = \tau \ne 0$, which leads to a contradiction.
  Therefore, we conclude that $\mathcal{Z}(\nu) =\emptyset$.
\end{proof}

Next, we focus on the infinite convolution generated by admissible pairs.
For convenience, in the rest of this section, we always assume that $\{ (N_k, B_k) \}_{k=1}^\f$ is a sequence of admissible pairs,
that the infinite convolution $$\mu = \delta_{N_{1}^{-1} B_1} * \delta_{(N_1 N_2)^{-1} B_2} * \cdots * \delta_{(N_1 N_2 \cdots N_n)^{-1} B_n } * \cdots$$ exists,
and that for every $n \ge 1$, $$\nu_{>n}=\delta_{N_{n+1}^{-1} B_{n+1}} * \delta_{(N_{n+1} N_{n+2})^{-1} B_{n+2}} * \cdots.$$
In order to prove Theorem \ref{theorem-IPZS-empty-2}, we need the following two lemmas.

\begin{lemma}\label{lemma-zero-set}
  Suppose that the condition {\rm(i)} in Theorem \ref{theorem-IPZS-empty-2} holds.
  Then for each $n\ge 1$ we have that $$\mathcal{O}(\wh{\nu}_{>n}) = \bigcup_{k=1}^\f N_{n+1} N_{n+2} \cdots N_{n+k} \mathcal{O}(M_{B_{n+k}}).$$
\end{lemma}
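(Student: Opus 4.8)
The plan is to write $\wh{\nu}_{>n}$ as an infinite product of the mask functions $M_{B_{n+k}}$ and then argue that this product can vanish only because one of its factors does. Since $\nu_{>n}$ is the weak limit of its partial convolutions $\nu_{>n}^{(K)}=\delta_{N_{n+1}^{-1}B_{n+1}}*\cdots*\delta_{(N_{n+1}\cdots N_{n+K})^{-1}B_{n+K}}$, Theorem \ref{weak-convergence-theorem} together with $\wh{\delta_{N^{-1}B}}(\xi)=M_B(\xi/N)$ and the multiplicativity of the Fourier transform under convolution gives
\[
\wh{\nu}_{>n}(\xi)=\lim_{K\to\f}\prod_{k=1}^{K} M_{B_{n+k}}\!\left(\frac{\xi}{N_{n+1}\cdots N_{n+k}}\right)=\prod_{k=1}^{\f} M_{B_{n+k}}\!\left(\frac{\xi}{N_{n+1}\cdots N_{n+k}}\right).
\]
Grouping the first $K$ factors and rescaling the tail, I also record the splitting
\[
\wh{\nu}_{>n}(\xi)=\wh{\nu}_{>n}^{(K)}(\xi)\cdot \wh{\nu}_{>n+K}\!\left(\frac{\xi}{N_{n+1}\cdots N_{n+K}}\right),
\]
which comes from $\nu_{>n}=\nu_{>n}^{(K)}*\tau_K$ with $\tau_K(\;\cdot\;)=\nu_{>n+K}(N_{n+1}\cdots N_{n+K}\;\cdot\;)$.

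The inclusion $\supseteq$ is immediate from the product formula: if $\xi\in N_{n+1}\cdots N_{n+k}\mcal{O}(M_{B_{n+k}})$ for some $k$, then the $k$-th factor is zero, so every partial product with index at least $k$ vanishes, whence $\wh{\nu}_{>n}(\xi)=0$.

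For the reverse inclusion $\sse$ I would use hypothesis (i). Let $\nu$ be the weak limit of $\{\nu_{>n_j}\}$. Since $\wh{\nu}(0)=1$ and $\wh{\nu}$ is continuous, there is $\delta_0>0$ with $|\wh{\nu}(\eta)|\ge 1/2$ for $|\eta|\le\delta_0$, and by the uniform convergence part of Theorem \ref{weak-convergence-theorem} there is $J$ with $|\wh{\nu}_{>n_j}(\eta)|\ge 1/4$ for all $j\ge J$ and all $|\eta|\le\delta_0$. Now fix $\xi$ with $\wh{\nu}_{>n}(\xi)=0$. Because each $N_i\ge 2$, the products $N_{n+1}\cdots N_{n_j}\to\f$, so I can pick $j\ge J$ with $n_j>n$ and $|\xi|/(N_{n+1}\cdots N_{n_j})\le\delta_0$. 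Applying the splitting with $K=n_j-n$, the tail factor $\wh{\nu}_{>n_j}(\xi/(N_{n+1}\cdots N_{n_j}))$ is nonzero, so the finite product $\wh{\nu}_{>n}^{(n_j-n)}(\xi)$ must vanish; therefore $M_{B_{n+k}}(\xi/(N_{n+1}\cdots N_{n+k}))=0$ for some $1\le k\le n_j-n$, i.e.\ $\xi\in N_{n+1}\cdots N_{n+k}\mcal{O}(M_{B_{n+k}})$.

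The genuine difficulty is exactly the inclusion $\sse$: an infinite product may tend to $0$ without any single factor being $0$, and the whole content of the lemma is to exclude this. Hypothesis (i) is what rules it out, since the convergent subsequence furnishes a limit $\nu$ that is bounded away from $0$ near the origin, and this bound passes uniformly to the tails $\nu_{>n_j}$; the expansion $N_i\ge 2$ then shrinks the relevant argument into $[-\delta_0,\delta_0]$, forcing the vanishing to be carried by one of finitely many factors. The remaining ingredients — the existence of each $\nu_{>m}$ as a probability measure and the identity $\wh{\delta_{N^{-1}B}}=M_B(\cdot/N)$ — are routine.
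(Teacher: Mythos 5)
Your proof is correct and follows essentially the same route as the paper: the splitting $\wh{\nu}_{>n}(\xi)=\prod_{k=1}^{q}M_{B_{n+k}}\bigl(\xi/(N_{n+1}\cdots N_{n+k})\bigr)\cdot\wh{\nu}_{>n+q}\bigl(\xi/(N_{n+1}\cdots N_{n+q})\bigr)$ gives one inclusion, and the other is obtained exactly as you do, by using the weak limit $\nu$ of $\{\nu_{>n_j}\}$ to bound $|\wh{\nu}_{>n_j}|$ below near the origin uniformly in $j$ and then choosing $j$ so large that the rescaled argument lands in that neighborhood, forcing one of the finitely many mask factors to vanish. No gaps.
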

\begin{proof}
  Fix $n \ge 1$.
  Note that for $q \ge 1$, we have that $$\wh{\nu}_{>n}(\xi) = \prod_{k=1}^{q} M_{B_{n+k}} \left( \frac{\xi}{N_{n+1} N_{n+2} \cdots N_{n+k}} \right) \cdot \wh{\nu}_{> n+q} \left( \frac{\xi}{N_{n+1} N_{n+2} \cdots N_{n+q}}  \right). $$
  It follows that $$ \bigcup_{k=1}^\f N_{n+1} N_{n+2} \cdots N_{n+k} \mathcal{O}(M_{B_{n+k}}) \subset \mathcal{O}(\wh{\nu}_{>n}). $$

  Let $\nu$ denote the weak limit of $\{\nu_{>n_j} \}$.
  Since $\widehat{\nu}(0) =1$ and $\widehat{\nu}(\xi)$ is continuous, there exists $h > 0$ such that $|\wh{\nu}(\xi)| \ge 1/2$ for all $|\xi|\le h$.
  Since $\{ \nu_{>n_j} \}$ converges weakly to $\nu$, by Theorem \ref{weak-convergence-theorem}, we have that $ \wh{\nu}_{>n_j}(\xi)$ converges to $\wh{\nu}(\xi)$ uniformly on $[-h, h]$.
  Hence, there exists $j_0 \ge 1$ such that for all $j \ge j_0$ and all $|\xi|\le h$,
  $$ |\wh{\nu}_{>n_j}(\xi) - \wh{\nu}(\xi)| \le 1/4. $$
  Thus, we obtain that for $j \ge j_0$ and $|\xi|\le h$, $$|\wh{\nu}_{>n_j}(\xi)| \ge 1/4.$$

  Assume that $\wh{\nu}_{>n}(\xi_0) =0$.
  Choose a sufficiently large  integer $j \ge j_0$ such that $n_{j} >n$ and $$ \left|\frac{\xi_0}{N_{n+1} N_{n+2} \cdots N_{n_{j}} } \right| < h.$$
  It follows that $$  \left| \wh{\nu}_{>n_{j}} \left( \frac{\xi_0}{N_{n+1} N_{n+2} \cdots N_{n_{j}} } \right) \right| \ge \frac{1}{4}.$$
  Note that $$0=\wh{\nu}_{>n}(\xi_0) = \prod_{k=1}^{n_{j} -n} M_{B_{n+k}} \left( \frac{\xi_0}{N_{n+1} N_{n+2} \cdots N_{n+k}} \right) \cdot \wh{\nu}_{>n_{j}} \left( \frac{\xi_0}{N_{n+1} N_{n+2} \cdots N_{n_{j}}}  \right). $$
  Thus, we conclude that $$\xi_0 \in \bigcup_{k=1}^\f N_{n+1} N_{n+2} \cdots N_{n+k} \mathcal{O}(M_{B_{n+k}}).$$
  So we have that $$ \mathcal{O}(\wh{\nu}_{>n}) \subset \bigcup_{k=1}^\f N_{n+1} N_{n+2} \cdots N_{n+k} \mathcal{O}(M_{B_{n+k}}). $$
  This completes the proof.
\end{proof}

\begin{lemma}\label{lemma-finiteness}
  Suppose that the conditions {\rm(i)} and {\rm(ii)} in Theorem \ref{theorem-IPZS-empty-2} hold.
  Then for each $h >0$, there exists a constant $C>0$, depending only on $h$, such that for all $n \ge 1$, $$\#\big( [-h,h] \cap \mathcal{Z}(\nu_{>n}) \big) \le C.$$
\end{lemma}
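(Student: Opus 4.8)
The plan is to bound not $\mcal{Z}(\nu_{>n})$ directly but the larger set $\mcal{O}(\wh{\nu}_{>n})$, since $\mcal{Z}(\nu_{>n}) \sse \mcal{O}(\wh{\nu}_{>n})$ (taking $k=0$ in the definition \eqref{integral-periodic-zero} of the integral periodic zero set). For this I would invoke Lemma \ref{lemma-zero-set}, which under condition (i) gives
$$\mcal{O}(\wh{\nu}_{>n}) = \bigcup_{k=1}^\f N_{n+1}N_{n+2}\cdots N_{n+k}\,\mcal{O}(M_{B_{n+k}}).$$
The idea is to read each level as a dilate of a piece of the \emph{single, $n$-independent} discrete set $D := \bigcup_{j=1}^\f N_j\mcal{O}(M_{B_j})$ supplied by condition (ii). Writing $P_{n,k-1} = N_{n+1}\cdots N_{n+k-1}$ (empty product equal to $1$), the $k$-th level equals $P_{n,k-1}\cdot\big(N_{n+k}\mcal{O}(M_{B_{n+k}})\big)$, and crucially $N_{n+k}\mcal{O}(M_{B_{n+k}}) \sse D$.

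Next I would extract two pieces of uniform data from $D$. Since $M_B(0)=1$ for every finite $B\sse\Z$, we have $0\notin \mcal{O}(M_{B_j})$, hence $0\notin D$; being locally finite, $D$ is closed, so there is $\rho>0$ with $(-\rho,\rho)\cap D=\emptyset$, and also $C_0 := \#\big(D\cap[-h,h]\big)<\f$. Now count the points of $\mcal{O}(\wh{\nu}_{>n})$ in $[-h,h]$ level by level. A point $\xi = P_{n,k-1}d$ with $d\in N_{n+k}\mcal{O}(M_{B_{n+k}})\sse D$ lies in $[-h,h]$ only if $|d| = |\xi|/|P_{n,k-1}| \le h$ (using $|P_{n,k-1}|\ge 1$), so $d\in D\cap[-h,h]$ and there are at most $C_0$ choices of $d$ at each level. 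Moreover, since every admissible pair satisfies $|N_j|\ge 2$, we have $|P_{n,k-1}|\ge 2^{k-1}$, while $|d|\ge\rho$; hence $\xi\in[-h,h]$ forces $2^{k-1}\rho\le h$, so only the levels with $k\le 1+\log_2(h/\rho)$ can contribute. Letting $K$ denote this finite bound on the number of relevant levels and setting $C=KC_0$, I obtain $\#\big([-h,h]\cap\mcal{O}(\wh{\nu}_{>n})\big)\le C$, and therefore $\#\big([-h,h]\cap\mcal{Z}(\nu_{>n})\big)\le C$, for every $n\ge 1$.

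The crucial point, and the only place where real care is needed, is the uniformity in $n$, which is exactly what the two hypotheses buy. Condition (ii) makes $D$ a single discrete set that does not depend on $n$, so the gap $\rho$ and the count $C_0$ are fixed once $h$ is fixed; and the bound $|N_j|\ge 2$ makes the dilation factors $P_{n,k-1}$ grow geometrically at a rate independent of $n$, so the number of levels meeting $[-h,h]$ is controlled uniformly. I expect the main obstacle to be organizing the per-level count and the level count simultaneously so that both estimates are visibly free of $n$; once the decomposition of $\mcal{O}(\wh{\nu}_{>n})$ into dilates of $D$ is in hand, the remainder is the elementary geometric estimate above.
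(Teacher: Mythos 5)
Your proposal is correct and follows essentially the same route as the paper: both pass from $\mcal{Z}(\nu_{>n})$ to $\mcal{O}(\wh{\nu}_{>n})$, invoke Lemma \ref{lemma-zero-set} to decompose the latter into dilates $N_{n+1}\cdots N_{n+k-1}D$ of the fixed discrete set $D=\bigcup_{j}N_j\mcal{O}(M_{B_j})$, use the gap $(-\rho,\rho)\cap D=\emptyset$ together with $|N_j|\ge 2$ to bound the number of contributing levels uniformly in $n$, and bound each level by $\#(D\cap[-h,h])$. The resulting constant $K\cdot C_0$ matches the paper's $k_0\cdot\#([-h,h]\cap D)$.
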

\begin{proof}
  Noting that $\mathcal{Z}(\nu_{>n}) \subset \mathcal{O}(\wh{\nu}_{>n})$, it suffices to show that for all $n \ge 1$, $$\#\big( [-h,h] \cap \mathcal{O}(\wh{\nu}_{>n}) \big) \le C.$$

  Let $$D = \bigcup_{k=1}^\f N_k \mcal{O}(M_{B_k}). $$
  Since $0 \not\in \mcal{O}(M_{B_k})$ for each $k \ge 1$, we have that $0\not\in D$.
  Since $D$ is discrete, there exists $\delta>0$ such that $[-\delta,\delta] \cap D =\emptyset$.

  For $h >0$, we choose $k_0 \ge 1$ such that $2^{k_0} \delta > h$.
  By Lemma \ref{lemma-zero-set}, we have that $$ \mathcal{O}(\wh{\nu}_{>n}) \subset \bigcup_{k=1}^\f N_{n+1} N_{n+2} \cdots N_{n+k-1} D, $$ where $N_{n+1} N_{n+2} \cdots N_{n+k-1}=1$ for $k=1$.
  Noting that all $|N_k| \ge 2$, we have that for $k \ge k_0 +1$,
  $$\big( [-h,h] \cap N_{n+1} N_{n+2} \cdots N_{n+k-1} D \big) \ \subset\  N_{n+1} N_{n+2} \cdots N_{n+k-1} \big( [-\delta,\delta] \cap D \big)  = \emptyset.$$
  Thus, $$ [-h,h] \cap \mathcal{O}(\wh{\nu}_{>n}) \subset \bigcup_{k=1}^{k_0}\big( [-h,h] \cap N_{n+1} N_{n+2} \cdots N_{n+k-1} D \big).$$
  It follows that
  \begin{align*}
    \#\big( [-h,h] \cap \mathcal{O}(\wh{\nu}_{>n}) \big)
    & \le \sum_{k=1}^{k_0} \# \big( [-h,h] \cap N_{n+1} N_{n+2} \cdots N_{n+k-1} D \big)\\
    & \le k_0 \cdot \#\big( [-h,h]\cap D \big),
  \end{align*}
  where the constant $k_0 \cdot \#\big( [-h,h]\cap D \big)$ depends only on $h$.
\end{proof}

Now we are ready to prove Theorem \ref{theorem-IPZS-empty-2}.

\begin{proof}[Proof of Theorem \ref{theorem-IPZS-empty-2}]
   For $k\ge 1$, since $(N_k,B_k)$ is an admissible pair, by Lemma \ref{admissible-pair-lemma} (ii), we may find $L_k \subset \set{0,1,\ldots, |N_k|-1}$ such that the set $L_k$ is a spectrum of $\delta_{N_k^{-1} B_k}$.
   It follows from Theorem \ref{spectrality-Q} that for all $\xi\in \R$, $$ \sum_{\ell \in L_k} \left| M_{B_k}\left( \frac{\xi+\ell}{N_k} \right) \right|^2 =1.  $$
   For $\ell \in L_k$, we define $$ \tau_{\ell,k}(x) = N_k^{-1}(x+\ell). $$

  We prove this theorem by contradiction. Suppose that $\mcal{Z}(\mu) \ne \emptyset$.
  Arbitrarily choose $\xi_0 \in \mcal{Z}(\mu)$ and set $Y_0 = \set{\xi_0}$. For $n \ge 1$, we define
  $$ Y_n = \set{ \tau_{\ell,n}(\xi) : \;\xi \in Y_{n-1}, \; \ell \in L_{n}, \; M_{B_{n}}\big( \tau_{\ell,n}(\xi) \big) \ne 0 }.$$

  First, we show that for each $n \ge 1$, $$ \# Y_{n-1} \le \# Y_n. $$
  Since for each $ \xi \in Y_{n-1}$,
  $$ \sum_{\ell \in L_{n} } \left| M_{B_{n}}\big( \tau_{\ell,n}(\xi) \big) \right|^2 =1,$$
  there exists at least one element $\ell \in L_{n}$ such that $M_{B_{n}}\big( \tau_{\ell,n}(\xi) \big) \ne 0$.
  On the other hand, for $\ell_1 \ell_2 \cdots \ell_n \ne \ell_1' \ell_2' \cdots \ell_n'$ where $\ell_j, \ell_j' \in L_{j}$ for $1 \le j \le n$,
  we have that
  $$\tau_{\ell_n,n} \circ \cdots \circ \tau_{\ell_2,2} \circ \tau_{\ell_1,1} (\xi_0) \ne \tau_{\ell_n',n} \circ \cdots \circ \tau_{\ell_2',2} \circ \tau_{\ell_1',1}(\xi_0).$$
  Otherwise, $$ \frac{ \xi_0 + \ell_1 + N_{1} \ell_2 + \cdots + N_{1} N_{2} \cdots N_{n-1} \ell_n }{ N_{1} N_{2} \cdots N_{n} } = \frac{ \xi_0 + \ell_1' + N_{1} \ell_2' + \cdots + N_{1} N_{2}\cdots N_{n-1} \ell_n' }{N_{1} N_{2} \cdots N_{n}}, $$
  that is, $$  \ell_1 + N_{1} \ell_2 + \cdots + N_{1} N_{2} \cdots N_{n-1} \ell_n = \ell_1' + N_{1} \ell_2' + \cdots + N_{1} N_{2} \cdots N_{n-1} \ell_n'.$$
  Let $j_0 = \min\set{1 \le j \le n: \ell_j \ne \ell_j'}$.
  Then we have that $N_{j_0} \mid \ell_{j_0} - \ell_{j_0}'$.
  This contradicts with Lemma \ref{admissible-pair-lemma} (i).
  Therefore, we conclude that $\# Y_{n-1} \le \# Y_n$ for $n \ge 1$.

  Next, we prove that for each $n \ge 1$, $$ Y_n \subset \mcal{Z}(\nu_{>n}) .$$
  Write $\nu_{>0} = \mu$, and it is clear that  $Y_0 \subset \mcal{Z}(\nu_{>0})$.
  For $n \ge 1$, we assume that $Y_{n-1} \subset \mcal{Z}(\nu_{> n-1})$.
  For each $\tau_{\ell,n}(\xi) \in Y_{n}$ where $\xi \in Y_{n-1}, \; \ell \in L_{n}$ and $M_{B_{n}}\big( \tau_{\ell,n}(\xi) \big) \ne 0$, we have that for all $k \in \Z$,
  \begin{align*}
    0 & = \wh{\nu}_{> n-1}(\xi + \ell + N_{n} k) \\
    &= M_{B_{n}}\left( \frac{\xi + \ell}{ N_{n} } +k \right) \wh{\nu}_{>n} \left( \frac{\xi + \ell}{N_{n}} +k \right)\\
     & = M_{B_{n}}\big( \tau_{\ell,n}(\xi) \big) \wh{\nu}_{>n} \big( \tau_{\ell,n}(\xi) +k \big),
  \end{align*}
  where the last equality follows from integral periodicity of $M_{B_{n}}$.
  Since $M_{B_{n}}\big( \tau_{\ell,n}(\xi) \big) \ne 0$, we have that
  $$ \wh{\nu}_{>n} \big( \tau_{\ell,n}(\xi) +k \big) =0 ,$$ for all $k \in \Z$.
  This implies that $\tau_{\ell,n}(\xi) \in \mcal{Z}(\nu_{> n})$. Thus, $Y_{n} \subset \mcal{Z}(\nu_{> n})$.
  By induction, we obtain that $Y_n \subset \mcal{Z}(\nu_{> n})$ for all $n \ge 1$.

  For every $\xi \in Y_n$, by the definition of $Y_n$, we write $\xi$ as
  $$\xi = \tau_{\ell_n,n} \circ \cdots \circ \tau_{\ell_2,2} \circ \tau_{\ell_1,1} (\xi_0)
     = \frac{ \xi_0 + \ell_1 + N_{1} \ell_2 + \cdots + N_{1} N_{2}\cdots N_{n-1} \ell_n }{ N_{1} N_{2} \cdots N_{n} } .$$
  Since $|N_{j}|\geq 2$ and $0\le \ell_j < |N_{j}|$ for each $j \ge 1$, we have that
  $$ |\xi| \le \frac{|\xi_0|}{2^n} + \frac{1}{2^{n-1}} + \frac{1}{2^{n-2}} + \cdots + 1 \le |\xi_0| + 2. $$
  Let $h= |\xi_0| + 2$. Then, we have that $Y_n \subset [-h , h]$ for all $n \ge 1$.
  It follows that $$Y_n \subset [-h,h] \cap \mcal{Z}(\nu_{>n}).$$
  By the increasing cardinality of $Y_n$ and Lemma \ref{lemma-finiteness}, there exists $n_0 \ge 1$ such that $\# Y_n = \# Y_{n+1}$ for all $n\ge n_0$.

  Choose $\eta_0 \in Y_{n_0}$.
  Since $\# Y_{n_0+1} = \# Y_{n_0} $, there exists a unique $\ell_1 \in L_{n_0+1}$ such that  $M_{B_{n_0 +1}}\big( \tau_{\ell_1,n_0+1}(\eta_0) \big) \ne 0$.
  Note that $$\sum_{\ell \in L_{n_0+1}} \left| M_{B_{n_0+1}}\big( \tau_{\ell,n_0+1}(\eta_0) \big) \right|^2 =1. $$
  Let $\eta_1 = \tau_{\ell_1,n_0+1}(\eta_0)$. Then we have that $\eta_1 \in Y_{n_0+1}$ and $|M_{B_{n_0+1}}(\eta_1)| =1$, that is, $$ \left| \frac{1}{\# B_{n_0+1}} \sum_{b\in B_{n_0+1}} e^{-2\pi i b \eta_1 } \right|= \left| \frac{1}{\# B_{n_0+1}} \sum_{b\in B_{n_0+1}} e^{-2\pi i (b-b_1) \eta_1 } \right| =1, $$
  where $b_1 \in B_{n_0+1}$ is arbitrarily chosen.
  It follows that $(b-b_1)\eta_1 \in \Z$ for all $b \in B_{n_0+1}$.
  Thus, we have that $$\eta_1 \in \frac{1}{\gcd(B_{n_0+1} - B_{n_0+1})} \Z.$$
  Recursively, we define a sequence $\set{\eta_k}_{k=1}^\f$ such that $\eta_k = \tau_{\ell_k,n_0+k}(\eta_{k-1}) \in Y_{n_0+k}$ for some $\ell_k \in L_{n_0+k}$, and
  \begin{equation}\label{eta-k}
    \eta_k \in \frac{1}{\gcd(B_{n_0+k} - B_{n_0 +k}) } \Z.
  \end{equation}

  Since $\eta_1$ is a rational number, $\eta_0$ must be rational.
  Write $\eta_0 = q/p$ with $p\in \N$, $q\in \Z$, and $\gcd(p,q)=1$.
  Then for every $k \ge 1$, we have that
  \begin{align*}
    \eta_k & = \tau_{\ell_k, n_0+k} \circ \cdots \circ \tau_{\ell_2, n_0+2} \circ \tau_{\ell_1, n_0+1}(\eta_0) \\
    & = \frac{ \eta_0 + \ell_1 + N_{n_0+1} \ell_2 + \cdots + N_{n_0+1} N_{n_0+2} \cdots N_{n_0+k-1} \ell_k }{ N_{n_0+1} N_{n_0+2} \cdots N_{n_0+k} } \\
    & = \frac{ q + p(\ell_1 + N_{n_0+1} \ell_2 + \cdots + N_{n_0+1} N_{n_0+2} \cdots N_{n_0+k-1} \ell_k) }{ p N_{n_0+1} N_{n_0+2} \cdots N_{n_0+k} }.
  \end{align*}
  Note that $q + p(\ell_1 + N_{n_0+1} \ell_2 + \cdots + N_{n_0+1} N_{n_0+2} \cdots N_{n_0+k-1} \ell_k)$ and $p$ are coprime.
  It follows from \eqref{eta-k} that $$ p \mid \gcd(B_{n_0+k} - B_{n_0 +k}).  $$
  Thus, $$p \mid \mathrm{gcd}\left( \bigcup_{j=n_0+1}^\f (B_j - B_j) \right).$$
  By the condition (iii), we have that $p=1$.
  It follows that $\eta_0$ is an integer. This contradicts the facts that $\eta_0 \in \mcal{Z}(\nu_{> n_0})$ and $\mcal{Z}(\nu_{> n_0}) \cap \Z = \emptyset$.

  Therefore we obtain that $\mathcal{Z}(\mu) = \emptyset$. The proof is completed.
\end{proof}

\section{Spectrality of general random convolutions}\label{section-random-convolution}

Recall that $\{(N_j,B_j)\}_{j=1}^m$ are finitely many admissible pairs, and $\Omega = \{1,2,\ldots, m\}^\N$.
Given a sequence of positive integers $\{n_k\}$, for $\omega = (\omega_k)_{k=1}^\f \in \Omega$, we define the infinite convolution
\begin{equation}\label{mu-omega-n-k}
  \mu_{\omega,\{n_k\}} = \delta_{ N_{\omega_1}^{-n_1} B_{\omega_1} } * \delta_{ N_{\omega_1}^{-n_1} N_{\omega_2}^{-n_2} B_{\omega_2} } * \cdots * \delta_{ N_{\omega_1}^{-n_1} N_{\omega_2}^{-n_2} \cdots N_{\omega_k}^{-n_k} B_{\omega_k} } * \cdots.
\end{equation}
For $q \ge 1$, we write
$$\mu_{\omega,\{n_k\},q} = \delta_{ N_{\omega_1}^{-n_1} B_{\omega_1} } * \delta_{ N_{\omega_1}^{-n_1} N_{\omega_2}^{-n_2} B_{\omega_2} } * \cdots * \delta_{ N_{\omega_1}^{-n_1} N_{\omega_2}^{-n_2} \cdots N_{\omega_q}^{-n_q} B_{\omega_q} }.$$
Note that $\mu_{\omega,\{n_k\},q}$ converges weakly to $\mu_{\omega,\{n_k\}}$ as $q$ approaches the infinity.

\begin{lemma}\label{lemma-uniform-approximation}
  For $f \in C_b(\R)$ and $\ep>0$, there exists $q_0 \ge 1$ such that
  \begin{equation*}
    \left| \int_{\R} f(x) \D\mu_{\omega,\{n_k\},q_0}(x) - \int_{\R} f(x) \D\mu_{\omega,\{n_k\}}(x) \right| < \ep
  \end{equation*}
  for all $\omega \in \Omega$ and all sequences of positive integers $\{n_k\}$.
\end{lemma}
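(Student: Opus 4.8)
The plan is to exploit the fact that, because there are only finitely many admissible pairs and each $|N_j|\ge 2$, all of the measures in question live on one fixed compact interval, while the discarded tail shrinks geometrically at a rate that is uniform in both $\omega$ and $\{n_k\}$. Set $R = \max\set{ |b| : b \in B_j,\ 1\le j \le m }$, which is finite since each $B_j$ is a finite subset of $\Z$. For any $\omega \in \Omega$, any sequence $\{n_k\}$, and any $q \ge 1$, every point of $\mathrm{spt}(\mu_{\omega,\{n_k\},q})$ has the form $\sum_{k=1}^q N_{\omega_1}^{-n_1}\cdots N_{\omega_k}^{-n_k} b_k$ with $b_k \in B_{\omega_k}$; since each $|N_{\omega_j}|\ge 2$ and each $n_j \ge 1$, we have $|N_{\omega_1}^{-n_1}\cdots N_{\omega_k}^{-n_k}| \le 2^{-(n_1+\cdots+n_k)} \le 2^{-k}$, so this sum is bounded in absolute value by $R\sum_{k=1}^\f 2^{-k} = R$. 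Hence $\mathrm{spt}(\mu_{\omega,\{n_k\},q}) \sse [-R,R]$ and likewise $\mathrm{spt}(\mu_{\omega,\{n_k\}}) \sse [-R,R]$, for all $\omega$, $\{n_k\}$, and $q$.

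Next I would split off the tail. Writing
$$\tau_q = \delta_{N_{\omega_1}^{-n_1}\cdots N_{\omega_{q+1}}^{-n_{q+1}} B_{\omega_{q+1}}} * \delta_{N_{\omega_1}^{-n_1}\cdots N_{\omega_{q+2}}^{-n_{q+2}} B_{\omega_{q+2}}} * \cdots,$$
one has $\mu_{\omega,\{n_k\}} = \mu_{\omega,\{n_k\},q} * \tau_q$. The same estimate as above, now summed from $k=q+1$, gives $\mathrm{spt}(\tau_q) \sse [-R\,2^{-q}, R\,2^{-q}]$, again uniformly in $\omega$ and $\{n_k\}$. Using the convolution identity from Section \ref{section-def-pre} together with the fact that $\tau_q$ is a probability measure, the difference of the two integrals becomes a single double integral,
$$\int_\R f \D\mu_{\omega,\{n_k\}} - \int_\R f \D\mu_{\omega,\{n_k\},q} = \int_\R \int_\R \big( f(x+y) - f(x) \big) \D\mu_{\omega,\{n_k\},q}(x) \D\tau_q(y).$$

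Finally I would invoke uniform continuity. The main obstacle is that a function $f \in C_b(\R)$ need not be uniformly continuous on all of $\R$, so smallness of $|y|$ by itself does not control $|f(x+y)-f(x)|$; this is precisely where the uniform support bound is indispensable. Since $x$ ranges over $[-R,R]$ and $y$ over $[-R,R]$, both $x$ and $x+y$ lie in the fixed compact set $K = [-2R,2R]$, on which $f$ \emph{is} uniformly continuous. Given $\ep>0$, I would choose $\delta>0$ with $|f(u)-f(v)|<\ep$ whenever $u,v\in K$ and $|u-v|<\delta$, then pick $q_0$ so large that $R\,2^{-q_0} < \delta$. For $y \in \mathrm{spt}(\tau_{q_0})$ we then have $|y|<\delta$, so the integrand above has modulus less than $\ep$ on the relevant supports, whence the double integral is at most $\ep$ in absolute value. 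Because $q_0$ depends only on $\ep$, on $f$, and on the finite data $\set{(N_j,B_j)}_{j=1}^m$ through $R$, but never on $\omega$ or on $\{n_k\}$, the desired uniform estimate follows.
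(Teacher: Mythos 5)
Your proof is correct and follows essentially the same route as the paper: a uniform compact support bound $[-R,R]$ coming from $|N_j|\ge 2$, the decomposition $\mu_{\omega,\{n_k\}}=\mu_{\omega,\{n_k\},q}*\tau_q$ with the tail supported in $[-R\,2^{-q},R\,2^{-q}]$, and uniform continuity of $f$ on a fixed compact set to control the double integral. The only cosmetic difference is the choice of compact set ($[-2R,2R]$ versus the paper's $[-(h+1),h+1]$ with $\delta<1$), which changes nothing.
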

\begin{proof}
  Let $h = \max\set{ |b|: b \in B_j,\; 1 \le j \le m }$.
  For $f \in C_b(\R)$ and $\ep>0$, since $f$ is uniformly continuous on $[-(h+1), h+1]$, there exists $0< \gamma <1$ such that for all $|x|,|y|\le h+1$ with $|x-y| < \gamma$ we have that
  \begin{equation}\label{uniform-continuous}
    |f(x)-f(y)|< \ep.
  \end{equation}
  Choose a sufficiently large integer $q_0$ such that $2^{-q_0} h < \gamma$.

  Fix $\omega \in \Omega$ and a sequence of positive integers $\{n_k\}$.
  For every sequence $\{ b_{\omega_k} \}_{k=1}^\f$ where $b_{\omega_k} \in B_{\omega_k}$ for each $k \ge 1$, noting that $|N_j| \ge 2$ for $1 \le j \le m$, we have that
  $$\left| \sum_{k=1}^{q} \frac{ b_{\omega_k} }{ N_{\omega_1}^{n_1} N_{\omega_2}^{n_2} \cdots N_{\omega_k}^{n_k} } \right| \le \sum_{k=1}^{q} \frac{h}{2^k} < h.$$
  Thus, we have that $$\mathrm{spt}(\mu_{\omega,\{n_k\}, q}) \subset [-h,h].$$
  The infinite convolution $\mu_{\omega,\{n_k\}}$ may be written as $\mu_{\omega,\{n_k\}} = \mu_{\omega,\{n_k\},q} * \mu_{\omega,\{n_k\},>q}$,
  where $$ \mu_{\omega,\{n_k\},>q} = \delta_{ N_{\omega_1}^{-n_1} N_{\omega_2}^{-n_2} \cdots N_{\omega_{q+1}}^{-n_{q+1}} B_{\omega_{q+1}} } * \delta_{ N_{\omega_1}^{-n_1} N_{\omega_2}^{-n_2} \cdots N_{\omega_{q+2}}^{-n_{q+2}} B_{\omega_{q+2}} } * \cdots $$
  is the tail of infinite convolution.
  It is easy to check that $$\mathrm{spt}( \mu_{\omega,\{n_k\},>q} ) \subset [-2^{-q} h, 2^{-q}h ].$$
  Note that
  \begin{align*}
    \int_{\R} f(x) \D \mu_{\omega,\{n_k\}}(x)
    & = \int_{\R} f(x) \D \mu_{\omega,\{n_k\},q} * \mu_{\omega,\{n_k\},>q}(x) \\
    & = \int_{\R^2} f(x+y) \D \mu_{\omega,\{n_k\},q} \times \mu_{\omega,\{n_k\},>q}(x,y) \\
    & = \int_{\R} \int_{\R} f(x+y) \D \mu_{\omega,\{n_k\},>q}(y) \D \mu_{\omega,\{n_k\},q}(x).
  \end{align*}
  Thus, by \eqref{uniform-continuous}, we have that
  \begin{align*}
    &~\left| \int_{\R} f(x) \D\mu_{\omega,\{n_k\},q_0}(x) - \int_{\R} f(x) \D\mu_{\omega,\{n_k\}}(x) \right| \\
    = & ~\left| \int_{\R} \int_{\R} \big( f(x) - f(x+y) \big) \D \mu_{\omega,\{n_k\},>q_0}(y) \D \mu_{\omega,\{n_k\},q_0}(x) \right| \\
    \le & ~\int_{-h}^{h} \int_{-2^{-q_0}h}^{2^{-q_0} h} \left|  f(x) - f(x+y) \right| \D \mu_{\omega,\{n_k\},>q_0}(y) \D \mu_{\omega,\{n_k\},q_0}(x) \\
    < & ~\ep.
  \end{align*}
  This completes the proof.
\end{proof}

\begin{proposition}\label{prop-unbounded}
  Let $\{(N_j,B_j)\}_{j=1}^m$ be finitely many admissible pairs, and let $\mu_{\omega,\{n_k\}}$ be defined by \eqref{mu-omega-n-k}.
  If the sequence $\{n_k\}$ is unbounded, then all infinite convolutions $\mu_{\omega,\{n_k\}}$ for $\omega \in \Omega$ are spectral measures.
\end{proposition}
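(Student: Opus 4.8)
The plan is to reduce Proposition \ref{prop-unbounded} to Theorem \ref{main-spectrality} by exhibiting, for each fixed $\omega$, a weakly convergent subsequence of the tail measures $\{\nu_{>n}\}$ whose limit $\nu$ has empty integral periodic zero set. The key structural feature I would exploit is that $\{n_k\}$ is unbounded, so there are infinitely many indices $k$ at which $n_k$ is large. The intuition is that when $n_k$ is large, the factor $N_{\omega_k}^{-n_k}$ contracts the corresponding digit set $B_{\omega_k}$ so strongly that the tail $\nu_{>n}$, taken just before such an index, behaves (after the first contraction) almost like a single contracted copy of $\delta_{N_{\omega_{k}}^{-n_k} B_{\omega_k}}$ followed by a tiny remainder. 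More precisely, I would choose the subsequence $\{n_j\}$ of break-points so that $n_{n_j+1}$ is the large exponent, and analyze the resulting limit.

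First I would set up the tail measures in this random setting. For a fixed $\omega$, writing $\nu_{>n}$ for the appropriately rescaled tail of $\mu_{\omega,\{n_k\}}$ as in \eqref{nu-large-than-n}, I would express $\widehat{\nu}_{>n}$ as an infinite product of the factors $M_{B_{\omega_{n+k}}}(\xi / (N_{\omega_{n+1}}^{n_{n+1}} \cdots N_{\omega_{n+k}}^{n_{n+k}}))$. Since there are only finitely many admissible pairs, the digit sets $B_j$ and the integers $N_j$ range over a finite collection, and by Lemma \ref{lemma-uniform-approximation} the tails enjoy uniform spatial control (all supported in $[-h,h]$ with tails contracting geometrically). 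This uniformity gives tightness, so by passing to a subsequence I can extract a weak limit $\nu$ of some $\{\nu_{>n_j}\}$; condition (i) of Theorem \ref{theorem-integral-periodic-zero-empty-1} type arguments are automatic here.

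The heart of the argument is showing $\mathcal{Z}(\nu) = \emptyset$, and I would route this through Theorem \ref{theorem-integral-periodic-zero-empty-2} rather than through the gcd condition of Theorem \ref{theorem-integral-periodic-zero-empty-1}, since the latter's condition (iii) need not hold for arbitrary $\omega$. The plan is to select the subsequence so that the limiting measure $\nu$ concentrates mass on a set $E$ that tiles poorly with integer translates. Concretely, because $\{n_k\}$ is unbounded, I can pick breakpoints $n_j$ such that the very next exponent is enormous; then the first digit set gets divided by a huge power of $N_{\omega_{n_j+1}}$, so the first-level structure of $\nu_{>n_j}$ collapses. In the weak limit, I expect $\nu$ to be supported on a very small interval around $0$ (a set $E$ of diameter strictly less than $1$ sitting inside a fundamental domain), whence $\nu(E) = 1 > 0$ while $\nu(E+k) = 0$ for every nonzero integer $k$. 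Applying Theorem \ref{theorem-integral-periodic-zero-empty-2} then yields $\mathcal{Z}(\nu) = \emptyset$, and Theorem \ref{main-spectrality} delivers spectrality of $\mu_{\omega,\{n_k\}}$ for every $\omega$.

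The main obstacle I anticipate is making the collapse argument uniform over all $\omega \in \Omega$ and rigorous in the weak limit: I must verify that along the chosen subsequence the limit measure $\nu$ genuinely has support of diameter less than $1$ regardless of which symbols appear, and that the support does not accidentally straddle an integer. The unboundedness of $\{n_k\}$ is precisely what I would lean on to guarantee arbitrarily strong contraction at the chosen breakpoints, forcing the limiting support into a single fundamental domain; the finiteness of the admissible-pair collection keeps all the relevant constants (the diameters, the bound $h$, the spectra $L_j$) uniformly controlled. Handling the degenerate boundary case, where the support might touch an integer endpoint, is the delicate point, and I would resolve it by choosing the breakpoints (and, if necessary, a small translation as in the proof of Theorem \ref{theorem-integral-periodic-zero-empty-2}) so that $E$ lies strictly inside $[0,1)$.
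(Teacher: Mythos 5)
Your proposal is correct and follows essentially the same route as the paper: both exploit the unboundedness of $\{n_k\}$ to choose breakpoints after which the rescaled tail $\nu_{>k}$ is supported in $[-2^{-n_{k+1}+1}h,\,2^{-n_{k+1}+1}h]$ with $h=\max\{|b|: b\in B_j,\ 1\le j\le m\}$, so the selected subsequence of tails converges weakly to $\delta_0$, and then Theorem \ref{main-spectrality} applies. The paper simply notes $\mcal{Z}(\delta_0)=\emptyset$ directly (as $\wh{\delta_0}\equiv 1$), so your detour through Theorem \ref{theorem-integral-periodic-zero-empty-2} and your concern about the limiting support straddling an integer are unnecessary --- the limit is exactly the point mass at $0$.
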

\begin{proof}
  Fix $\omega \in \Omega$ and write $\mu = \mu_{\omega,\{n_k\}}$.
  Then the measure $\mu$ is the infinite convolution generated by the sequence of admissible pairs $\{ ( (N_{\omega_k})^{n_k}, B_{\omega_k}) \}_{k=1}^\f$.
  Recall the notation $\nu_{>k}$ defined in \eqref{nu-large-than-n} for the infinite convolution $\mu$.
  We have that $$\nu_{>k} = \delta_{ N_{\omega_{k+1}}^{-n_{k+1}} B_{\omega_{k+1}} } * \delta_{ N_{\omega_{k+1}}^{-n_{k+1}} N_{\omega_{k+2}}^{-n_{k+2}}  B_{\omega_{k+2}} }* \cdots.$$
  Let $h = \max\set{ |b|: b \in B_j,\; 1 \le j \le m }$, and it is clear that
  $$\mathrm{spt}(\nu_{>k}) \subset [ -2^{-n_{k+1}+1} h,  2^{-n_{k+1}+1} h ]. $$

  Since the sequence $\{n_k\}$ is unbounded, we may find a subsequence $\{n_{k_j}\}$ such that $$\lim_{j \to \f} n_{k_j}= \f.$$
  Consider the sequence $\{\nu_{>k_j -1}\}$, and we have that $\mathrm{spt}(\nu_{>k_j -1}) \subset [ -2^{-n_{k_j}+1} h,  2^{-n_{k_j}+1} h ].$
  Thus, the sequence $\{ \nu_{>k_j-1} \}$ converges weakly to $\delta_0$, the Dirac measure concentrated on $0$.
  Obviously, we have that $\mathcal{Z}(\delta_0) =\emptyset$.
  It follows from Theorem \ref{main-spectrality} that $\mu = \mu_{\omega,\{n_k\}}$ is a spectral measure.
\end{proof}

We denote by $\sigma$ the left shift on the symbolic space $\Omega$, that is, $$\sigma(\omega) = \omega_2 \omega_3 \omega_4 \cdots $$
for $\omega = (\omega_k)_{k=1}^\f \in \Omega$.

\begin{lemma}\label{lemma-4-3}
  Suppose that the Bernoulli measure $\PP$ on $\Omega$ is associated with a positive probability vector.
  Then for $\PP$-a.e. $\omega \in \Omega$, there exists a subsequence $\set{k_j}$ such that $\{\sigma^{k_j}(\omega)\}$ converges to $(12\cdots m)^\f$ in $\Omega$.
\end{lemma}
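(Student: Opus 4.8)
The plan is to recognize that $\sigma^{k_j}(\omega) \to (12\cdots m)^\f$ in $\Omega$ means exactly that the matching-prefix length tends to infinity; by the characterization of convergence recalled in Section~\ref{section-def-pre}, this amounts to saying that for every $M \ge 1$ the finite word $w_M = (12\cdots m)^M$, of length $mM$, occurs in $\omega$ starting at position $k_j+1$ for arbitrarily large $j$. Hence it suffices to produce, for $\PP$-a.e. $\omega$, a strictly increasing sequence $\set{k_j}$ at whose positions longer and longer such blocks begin. I would therefore first show that the event
$$\mathcal{A}_M = \set{\omega \in \Omega:\; w_M \text{ occurs in } \omega \text{ at infinitely many positions}}$$
has full $\PP$-measure for each fixed $M$.

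Second, to establish $\PP(\mathcal{A}_M)=1$ I would sidestep the correlations between overlapping occurrences by looking only at the disjoint windows of length $mM$, namely the coordinate blocks indexed by $[\,imM+1,\,(i+1)mM\,]$ for $i \ge 0$. Let $E_i$ be the event that the restriction of $\omega$ to the $i$-th such window equals $w_M$. Because $\PP$ is a Bernoulli measure the coordinates are i.i.d., so the events $\set{E_i}_{i\ge 0}$ are mutually independent and each has the same probability $\PP(E_i) = (p_1 p_2 \cdots p_m)^M$, which is strictly positive precisely because the probability vector is positive. Consequently $\sum_{i\ge 0} \PP(E_i) = \f$, and the second Borel--Cantelli lemma gives that infinitely many $E_i$ occur for $\PP$-a.e. $\omega$; in particular $w_M$ occurs at infinitely many positions, so $\PP(\mathcal{A}_M)=1$. (Equivalently one could invoke the ergodicity of the Bernoulli shift together with Birkhoff's theorem applied to the indicator of the cylinder $[12\cdots m]$.)

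Third, taking the countable intersection $\mathcal{A} = \bigcap_{M=1}^\f \mathcal{A}_M$, which still has full measure, I would fix $\omega \in \mathcal{A}$ and extract the subsequence by a diagonal choice: since $w_1$ occurs infinitely often, pick $k_1$ with $\omega_{k_1+1}\cdots \omega_{k_1+m} = w_1$; having chosen $k_1 < \cdots < k_{j-1}$, use that $w_j$ occurs at arbitrarily large positions to pick $k_j > k_{j-1}$ with $\omega_{k_j+1}\cdots\omega_{k_j+mj} = w_j$. Then $\sigma^{k_j}(\omega)$ agrees with $(12\cdots m)^\f$ on the first $mj$ coordinates, so $\rho\big(\sigma^{k_j}(\omega),(12\cdots m)^\f\big) \le 2^{-(mj+1)} \to 0$, which is the claimed convergence.

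The only genuine obstacle is the dependence among overlapping occurrences of $w_M$, which would obstruct a naive application of Borel--Cantelli; restricting to disjoint windows (or appealing to ergodicity) removes it cleanly, and the positivity of the probability vector is exactly what guarantees that the per-window probability $(p_1\cdots p_m)^M$ is nonzero, so that the divergent-sum hypothesis of the independent Borel--Cantelli lemma is met.
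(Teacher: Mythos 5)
Your proof is correct. The overall architecture matches the paper's: establish that, almost surely, the word $(12\cdots m)^M$ occurs at infinitely many positions for every $M$, intersect over the countably many $M$, and then recursively extract $k_1<k_2<\cdots$ so that $\sigma^{k_j}(\omega)$ agrees with $(12\cdots m)^\f$ on an initial block of length tending to infinity. The only real difference is the tool used for the key intermediate claim. The paper invokes ergodicity of the Bernoulli shift and the Birkhoff ergodic theorem to get that every finite word $i_1\cdots i_q$ occurs in $\omega$ with asymptotic frequency $p_{i_1}\cdots p_{i_q}>0$, from which infinitely many occurrences follow; you instead restrict to the disjoint coordinate windows $[imM+1,(i+1)mM]$, observe that the corresponding events are independent under the product measure with common probability $(p_1\cdots p_m)^M>0$, and apply the second Borel--Cantelli lemma. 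Your route is more elementary, needing only independence of coordinates rather than the ergodic theorem, at the cost of yielding no frequency information (which is not needed here); the paper's route delivers the stronger frequency statement \eqref{positive-frequency} but leans on a heavier theorem. You correctly identified and handled the one pitfall, namely that overlapping occurrences are not independent, and your parenthetical remark about the ergodic alternative is precisely the paper's argument.
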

\begin{proof}
  Let $(p_1,p_2,\ldots,p_m)$ be the positive probability vector associated with the Bernoulli measure $\PP$.
  Since the left shift $\sigma$ is ergodic with respect to the Bernoulli measure $\PP$ on $\Omega$ \cite{Walters-1982}, by Birkhoff ergodic theorem, there exists a full measure subset $\Omega_0 \subset \Omega$ such that
  for any $\omega \in \Omega_0$ and for any finite word $i_1 i_2 \cdots i_q $, where $i_j \in \{1,2,\ldots,m\}$ for $1 \le j \le q$, we have that $$\lim_{n \to \f} \frac{\# \set{ 1 \le k \le n: \omega_k \omega_{k+1} \cdots \omega_{k+q-1} = i_1 i_2 \cdots i_q } }{n} = p_{i_1} p_{i_2} \cdots p_{i_q}.$$
  It follows that for $\omega \in \Omega_0$ and $q\ge 1$,
  \begin{equation}\label{positive-frequency}
    \lim_{n \to \f} \frac{\# \set{ 1 \le k \le n: \omega_k \omega_{k+1} \cdots \omega_{k+qm-1} = (1 2 \cdots m)^q } }{n} = (p_1 p_2 \cdots p_m)^q >0.
  \end{equation}

  Fix $\omega \in \Omega_0$. By \eqref{positive-frequency}, we first choose $k_1 \ge 1$ such that $\omega_{k_1 +1} \omega_{k_1+2} \cdots \omega_{k_1 + m} = 12 \cdots m$.
  Assume that we have chosen $k_1< k_2 < \cdots < k_q$ such that $\omega_{k_j +1} \omega_{k_j+2} \cdots \omega_{k_j + jm} = (12 \cdots m)^j$ for $1\le j \le q$.
  Then, by \eqref{positive-frequency}, we can find $k_{q+1} > k_q$ such that $$\omega_{k_{q+1} +1} \omega_{k_{q+1}+2} \cdots \omega_{k_{q+1} + (q+1)m} = (12 \cdots m)^{q+1}.$$
  Thus, we recursively find a subsequence $\set{k_j}$ such that for all $j \ge 1$,
  $$\omega_{k_j +1} \omega_{k_j+2} \cdots \omega_{k_j + jm} = (12 \cdots m)^j.$$
  It follows that $\{\sigma^{k_j}(\omega)\}$ converges to $(12\cdots m)^\f$ in $\Omega$.
  The proof is completed.
\end{proof}

\begin{proposition}\label{prop-bounded}
  Let $\{(N_j,B_j)\}_{j=1}^m$ be finitely many admissible pairs, and let $\mu_{\omega,\{n_k\}}$ be defined by \eqref{mu-omega-n-k}.
  If the sequence $\{ n_k \}$ is bounded, then for every Bernoulli measure $\PP$ on $\Omega$, the infinite convolution $\mu_{\omega,\{n_k\}}$ is a spectral measure for $\PP$-a.e. $\omega \in \Omega$.
\end{proposition}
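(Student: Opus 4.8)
The plan is to produce, for $\PP$-a.e. $\omega$, a weakly convergent subsequence of the tail measures $\{\nu_{>k}\}$ (in the sense of \eqref{nu-large-than-n}, now attached to the generating sequence $\{((N_{\omega_k})^{n_k},B_{\omega_k})\}_{k=1}^\f$) whose limit $\nu$ follows the \emph{periodic} pattern $(12\cdots m)^\f$, and then to verify $\mcal{Z}(\nu)=\emptyset$ so that Theorem \ref{main-spectrality} applies. The periodicity supplied by Lemma \ref{lemma-4-3} is exactly what will force the $\gcd$-condition to hold in the limit.

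I would first carry out two reductions. Since $\PP$ is Bernoulli, symbols $j$ with $p_j=0$ are $\PP$-a.s. absent, so after discarding them I may assume the probability vector is positive. Next I normalize the digit sets: translate each $B_j$ (Lemma \ref{admissible-pair-lemma}(iii)) so that $0\in B_j$, and then divide by $d:=\gcd\big(\bigcup_{j=1}^m(B_j-B_j)\big)$. This is legitimate because $0\in B_j$ forces $B_j\sse B_j-B_j\sse d\Z$, whence $d\mid\gcd(B_j)$ and $(N_j,d^{-1}B_j)$ is admissible by Lemma \ref{admissible-pair-lemma}(iv). For each fixed $\omega$ the translations change $\mu_{\omega,\{n_k\}}$ only by a convergent shift $\sum_k b_{\omega_k}^{(0)}N_{\omega_1}^{-n_1}\cdots N_{\omega_k}^{-n_k}$, and the scaling by $d^{-1}$ is uniform; by Lemma \ref{spectrality-invariant} spectrality is unaffected. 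Hence I may assume $\gcd\big(\bigcup_{j=1}^m(B_j-B_j)\big)=1$.

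For the main construction, fix $\omega$ in the full-measure set of Lemma \ref{lemma-4-3} and choose $\{k_j\}$ with $\sigma^{k_j}(\omega)\to(12\cdots m)^\f$. Because $\{n_k\}$ is bounded it takes finitely many values, so a diagonal extraction refines $\{k_j\}$ (preserving the convergence above) so that for each fixed $\ell$ the exponent $n_{k_j+\ell}$ is eventually constant, say $n'_\ell\in\{1,\dots,\max_k n_k\}$. I claim $\nu_{>k_j}$ converges weakly to the infinite convolution $\nu$ generated by $\{((N_{c_\ell})^{n'_\ell},B_{c_\ell})\}_{\ell\ge 1}$, where $c_\ell$ is the $\ell$-th symbol of $(12\cdots m)^\f$. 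To prove this I split $\nu_{>k_j}=(\nu_{>k_j})_q*(\nu_{>k_j})_{>q}$: as in Lemma \ref{lemma-uniform-approximation} the tail factor is supported in an interval shrinking to $\{0\}$ as $q\to\f$ uniformly in $j$, so its Fourier transform tends to $1$ uniformly on compacts, while for each fixed $q$ the finite factor $(\nu_{>k_j})_q$ coincides with the finite convolution built from $(c_\ell,n'_\ell)_{\ell\le q}$ once $j$ is large, and the latter's Fourier transform converges to $\wh{\nu}$ as $q\to\f$. A standard three-$\ep$ argument and Theorem \ref{weak-convergence-theorem} then give $\nu_{>k_j}\to\nu$ weakly.

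It remains to check $\mcal{Z}(\nu)=\emptyset$. The measure $\nu$ is generated by admissible pairs drawn from the finite set $\{((N_j)^n,B_j):1\le j\le m,\ 1\le n\le\max_k n_k\}$, and since the word $(12\cdots m)^\f$ repeats every symbol, every tail satisfies $\gcd\big(\bigcup_{\ell\ge k}(B_{c_\ell}-B_{c_\ell})\big)=\gcd\big(\bigcup_{j=1}^m(B_j-B_j)\big)=1$; thus Corollary \ref{coro-finite-admissible-pair} yields $\mcal{Z}(\nu)=\emptyset$, and Theorem \ref{main-spectrality} finishes the proof. I expect the principal difficulty to lie in the limit passage of the construction step — simultaneously controlling the tail uniformly (so that $\nu$ is a genuine probability measure) and forcing the finite part to stabilize at the periodic pattern — together with the bookkeeping guaranteeing that the $\gcd$-$1$ condition is inherited by \emph{every} tail of $\nu$, which is precisely what the periodicity from Lemma \ref{lemma-4-3} and the initial reduction to $\gcd=1$ are arranged to ensure.
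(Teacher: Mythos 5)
Your proposal is correct and follows essentially the same route as the paper: the same two reductions (to a positive probability vector and to $\gcd\big(\bigcup_{j=1}^m(B_j-B_j)\big)=1$), the same use of Lemma \ref{lemma-4-3} to steer the shifted tails toward $(12\cdots m)^\f$, and the same passage to the weak limit via Lemma \ref{lemma-uniform-approximation}, concluding with Corollary \ref{coro-finite-admissible-pair} and Theorem \ref{main-spectrality}. Your diagonal extraction stabilizing each $n_{k_j+\ell}$ is just the paper's compactness argument in $\Omega\times\Sigma$ phrased differently.
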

\begin{proof}
  The probability vector associated with the Bernoulli measure $\PP$ is denoted by $(p_1, p_2, \ldots, p_m)$.
  By rearranging the symbols, there exists $1\le m' \le m$ such that $p_j>0$ for $1\le j \le m'$ and $p_{m'+1} = p_{m'+2}= \cdots = p_m =0$.
  Let $\Omega'= \{1,2,\ldots, m'\}^\N$. Then we have that $\PP(\Omega') =1$.
  Let $\PP'$ be the restriction of $\PP$ on $\Omega'$. In fact, $\PP'$ is the Bernoulli measure on $\Omega'$ associated with the positive probability vector $(p_1, p_2, \ldots, p_{m'})$.
  It suffices to show that the infinite convolution $\mu_{\omega,\{n_k\}}$ is a spectral measure for $\PP'$-a.e. $\omega \in \Omega'$.
  Thus, we assume that the probability vector $(p_1,p_2,\ldots,p_m)$ is positive.

  Let $$d = \gcd\left( \bigcup_{j=1}^m (B_j - B_j) \right).$$
  For $1 \le j \le m$, we define $B'_j = (B_j - b_j)/d$ for some $b_j \in B_j$.
  Then we have that $$ \gcd\left( \bigcup_{j=1}^m (B'_j - B'_j) \right) =1. $$
  We write $$\mu'_{\omega,\{n_k\}} = \delta_{ N_{\omega_1}^{-n_1} B'_{\omega_1} } * \delta_{ N_{\omega_1}^{-n_1} N_{\omega_2}^{-n_2} B'_{\omega_2} } * \cdots * \delta_{ N_{\omega_1}^{-n_1} N_{\omega_2}^{-n_2} \cdots N_{\omega_k}^{-n_k} B'_{\omega_k} } * \cdots.$$
  It is easy to check that $$\mu_{\omega,\{n_k\}} = \mu'_{\omega,\{n_k\}} \circ T_{d,b_\omega}^{-1}, $$
  where $$b_\omega = \sum_{k=1}^{\f} \frac{ b_{\omega_k} }{ N_{\omega_1}^{n_1} N_{\omega_2}^{n_2} \cdots N_{\omega_k}^{n_k} },$$
  and the function $T_{d,b_{\omega}}$ is defined in \eqref{linear-transformation}.
  By Lemma \ref{spectrality-invariant}, it suffices to show that $\mu'_{\omega,\{n_k\}}$ is a spectral measure for $\PP$-a.e. $\omega \in \Omega$.
  Note by Lemma \ref{admissible-pair-lemma} (iii) and (iv) that $(N_j, B'_j)$ is also an admissible pair for each $1 \le j \le m$.
  So, in the following, we also assume that
  \begin{equation}\label{prime-condition}
    \gcd\left( \bigcup_{j=1}^m (B_j - B_j) \right)=1.
  \end{equation}

  Let $\eta=(\eta_k)_{k=1}^\f = (12\cdots m)^\f$.
  The measure $\mu_{\eta,\{n_k\}}$ is the infinite convolution generated by the sequence of admissible pairs $\{ ( (N_{\eta_k})^{n_k}, B_{\eta_k}) \}_{k=1}^\f$.
  Since the sequence $\{ n_k \}$ is bounded, the sequence $\{ ( (N_{\eta_k})^{n_k}, B_{\eta_k}) \}_{k=1}^\f$ is chosen from a finite set of admissible pairs.
  By \eqref{prime-condition} and Corollary \ref{coro-finite-admissible-pair}, we have that
  \begin{equation}\label{mu-eta-empty}
    \mathcal{Z}(\mu_{\eta,\{n_k\}}) = \emptyset,
  \end{equation}
  for all bounded sequences $\{n_k\}$.

  By Lemma \ref{lemma-4-3}, there exists a full measure subset $\Omega_0 \subset \Omega$ such that for $\omega \in \Omega_0$ we have that $\{\sigma^{k_j}(\omega)\}$ converges to $\eta$ for some subsequence $\{k_j\}$.
  Fix $\omega \in \Omega_0$ and such a subsequence $\set{k_j}$.
  Next, we show that $\mu_{\omega,\{n_k\}}$ is a spectral measure.

  Write $\mu = \mu_{\omega,\{n_k\}}$.
  Then the measure $\mu$ is the infinite convolution generated by the sequence of admissible pairs $\{ ( (N_{\omega_k})^{n_k}, B_{\omega_k}) \}_{k=1}^\f$.
  Recall the notation $\nu_{>k}$ defined in \eqref{nu-large-than-n} for the infinite convolution $\mu$.
  For $k \ge 1$, we have that $$\nu_{>k} = \mu_{\sigma^k(\omega), \{ n_{k+\ell} \}_{\ell=1}^\f}.$$
  Let $M=\max\{n_k: k \ge 1\}$, and let $\Sigma$ be the symbolic space over the alphabet $\{1,2,\ldots, M\}$.
  By the compactness of $\Omega \times \Sigma$, the sequence $$\set{ \big( \sigma^{k_j}(\omega), \{ n_{k_j+\ell} \}_{\ell=1}^\f \big) }_{j=1}^\f$$ has a convergent subsequence in $\Omega \times \Sigma$.
  By taking the subsequence, we assume that $\set{ \big( \sigma^{k_j}(\omega),\{ n_{k_j+\ell} \}_{\ell=1}^\f \big) }_{j=1}^\f$ converges to $(\eta, \{ m_k\})$ for some sequence $\{m_k\} \in \Sigma$.

  For $f \in C_b(\R)$ and $\ep >0$, by Lemma \ref{lemma-uniform-approximation}, there exists $q_0 \ge 1$ such that
  $$\left| \int_{\R} f(x) \D \mu_{ \sigma^{k_j}(\omega), \{ n_{k_j+\ell} \}_{\ell=1}^\f, q_0 }(x) -  \int_{\R} f(x) \D \mu_{ \sigma^{k_j}(\omega), \{ n_{k_j+\ell} \}_{\ell=1}^\f }(x) \right| < \frac{\ep}{2}$$
  for all $j \ge 1$, and
  $$ \left| \int_{\R} f(x) \D \mu_{\eta, \{ m_k \}, q_0}(x) -  \int_{\R} f(x) \D \mu_{\eta, \{ m_k \}}(x) \right| < \frac{\ep}{2}.$$
  Since $\set{ \big( \sigma^{k_j}(\omega),\{ n_{k_j+\ell} \}_{\ell=1}^\f \big) }_{j=1}^\f$ converges to $(\eta, \{ m_k\})$,
  there exists $j_0 \ge 1$ such that for $j \ge j_0$, we have that
  $$\mu_{ \sigma^{k_j}(\omega), \{ n_{k_j+\ell} \}_{\ell=1}^\f, q_0 } = \mu_{\eta, \{ m_k \}, q_0}. $$
  Thus, for $j \ge j_0$,
  $$\left| \int_{\R} f(x) \D \mu_{ \sigma^{k_j}(\omega), \{ n_{k_j+\ell} \}_{\ell=1}^\f }(x) - \int_{\R} f(x) \D \mu_{\eta, \{ m_k \}}(x) \right| < \ep. $$
  This implies that the sequence $\{\nu_{>k_j} = \mu_{ \sigma^{k_j}(\omega), \{ n_{k_j+\ell} \}_{\ell=1}^\f } \}$ converges weakly to $\mu_{ \eta, \{ m_k \} }$.
  By \eqref{mu-eta-empty}, we have that $\mathcal{Z}(\mu_{ \eta, \{ m_k \} }) = \emptyset$.
  It follows from Theorem \ref{main-spectrality} that $\mu = \mu_{\omega,\{n_k\}}$ is a spectral measure.
\end{proof}

\begin{proof}[Proof of Theorem \ref{general-theorem}]
  It follows from Proposition \ref{prop-unbounded} and Proposition \ref{prop-bounded}.
\end{proof}

\section{Spectrality of special random convolutions}\label{section-special-case}

Recall that $t \ge 1$ is an integer, $N,p \ge 2$ are integers with $\gcd(N,p)=1$, and
$$N_1 = N_2 = tN,\; B_1=\set{0,1,\ldots, N - 1},\; B_2 = p \set{0,1,\ldots, N - 1}.$$
Let $L = \{ 0, t, 2t, \ldots, (N-1)t \}$.
It is straightforward to verify that these two matrices $$ \left( \frac{1}{\sqrt{N}} e^{-2\pi i \frac{b\ell}{tN}} \right)_{b \in B_1, \ell \in L},\; \left( \frac{1}{\sqrt{N}} e^{-2\pi i \frac{b\ell}{tN}} \right)_{b \in B_2, \ell \in L} $$
are unitary.
This implies that $(N_1, B_1)$ and $(N_2, B_2)$ are admissible pairs.

For $\omega \in \Omega = \{1,2\}^\N$, we define
$$ \mu_\omega = \delta_{(tN)^{-1} B_{\omega_1}} * \delta_{(tN)^{-2} B_{\omega_2}}* \cdots * \delta_{(tN)^{-k} B_{\omega_k}} * \cdots.$$
It follows that $$\wh{\mu}_\omega(\xi) = \prod_{k=1}^{\f} M_{B_{\omega_k}} \left( \frac{\xi}{(tN)^k} \right).$$
As we have done in the proof of Lemma \ref{lemma-zero-set}, we conclude that
\begin{equation}\label{zero-set-random-convolution}
  \mathcal{O}(\widehat{\mu}_\omega) = \bigcup_{k=1}^\f (tN)^k \mathcal{O}\big( M_{B_{\omega_k}} \big).
\end{equation}

\begin{lemma}\label{lemma-mu-eta-emptyset}
  For $\eta = 12^\f$, we have that $\mathcal{Z}(\mu_\eta) = \emptyset$.
\end{lemma}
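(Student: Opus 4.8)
The plan is to exploit the explicit factorization of $\wh{\mu}_\eta$ together with an explicit description of the relevant zero sets. Writing $\nu_{>1}=\delta_{(tN)^{-1}B_2}*\delta_{(tN)^{-2}B_2}*\cdots=\mu_{tN,B_2}$ for the self-similar tail, the convolution structure gives
\begin{equation*}
  \wh{\mu}_\eta(\xi)=M_{B_1}\!\left(\frac{\xi}{tN}\right)\wh{\nu}_{>1}\!\left(\frac{\xi}{tN}\right),
\end{equation*}
and since $M_{B_2}(x)=M_{B_1}(px)$ one has $\wh{\nu}_{>1}(\zeta)=\wh{G}(p\zeta)$ with $G=\mu_{tN,B_1}$. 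The key structural fact I would record first is that, because $B_1=\{0,1,\dots,N-1\}$ has zero set $\mcal{O}(M_{B_1})=\frac1N\Z\setminus\Z$, the same computation proving Lemma \ref{lemma-zero-set} (equivalently \eqref{zero-set-random-convolution}) yields
\begin{equation*}
  \mcal{O}(\wh{G})=\bigcup_{k\ge1}(tN)^k\,\mcal{O}(M_{B_1})=\set{\,t^kN^{k-1}j:\ k\ge1,\ N\nmid j\,}\sse \Z\setminus\{0\}.
\end{equation*}
Thus $\wh{G}$ can vanish only at nonzero integers; this is precisely where the full residue system $B_1$ at the first coordinate is used.

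Next I would split on whether $\xi\in\Z$. If $\xi\in\Z$, then $\wh{\mu}_\eta(\xi+(-\xi))=\wh{\mu}_\eta(0)=1\neq0$, so $\xi\notin\mcal{Z}(\mu_\eta)$; this is just the general fact that $\mcal{Z}(\mu)\cap\Z=\emptyset$ for every $\mu\in\mcal{P}(\R)$. Now suppose $\xi\notin\Z$ and, for contradiction, $\xi\in\mcal{Z}(\mu_\eta)$. For every $k\in\Z$ we have $\xi+k\notin\Z\supseteq t\Z$, so the first factor $M_{B_1}(\frac{\xi+k}{tN})$ never vanishes (its zeros force $\xi+k\in t(\Z\setminus N\Z)\sse\Z$). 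Hence the second factor must vanish at every integer shift, i.e. $\wh{G}\big(\frac{p(\xi+k)}{tN}\big)=0$ for all $k\in\Z$. By the displayed description of $\mcal{O}(\wh{G})$ this forces $\frac{p(\xi+k)}{tN}\in\Z$ for all $k$, and subtracting the values at $k+1$ and $k$ gives $\frac{p}{tN}\in\Z$, i.e. $tN\mid p$. Since $N\ge2$ and $\gcd(N,p)=1$ we have $N\nmid p$, a fortiori $tN\nmid p$, a contradiction. Therefore $\mcal{Z}(\mu_\eta)=\emptyset$.

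The main point — and the only delicate step — is recognizing that Theorem \ref{theorem-integral-periodic-zero-empty-1} cannot be invoked here: the tail digit set satisfies $\gcd(B_2-B_2)=p\ge2$, so hypothesis (iii) fails, and indeed the self-similar tail $\nu_{>1}=\mu_{tN,B_2}$ has $\mcal{Z}(\nu_{>1})\neq\emptyset$. The argument must therefore use two special features at once: the containment $\mcal{O}(\wh{G})\sse\Z$ coming from the full residue system $B_1$, and the coprimality $\gcd(N,p)=1$, which is exactly what excludes $tN\mid p$. Everything else is a routine verification of the convolution identities and of the sets $\mcal{O}(M_{B_i})$, so I expect no further obstacle.
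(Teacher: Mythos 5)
Your proof is correct and follows essentially the same route as the paper's: both arguments rest on the explicit zero sets $\mcal{O}(M_{B_1})=\tfrac{1}{N}(\Z\setminus N\Z)$ and $\mcal{O}(M_{B_2})=\tfrac{1}{pN}(\Z\setminus N\Z)$, the product/zero-set identity \eqref{zero-set-random-convolution}, the observation that any non-integer element of $\mcal{O}(\wh{\mu}_\eta)$ must lie in a lattice of the form $\tfrac{N}{p}\Z$ (you phrase this via the auxiliary measure $G=\mu_{tN,B_1}$ and $M_{B_2}(x)=M_{B_1}(px)$, the paper reads it off the union directly), and finally the differencing of two consecutive integer translates against $\gcd(N,p)=1$. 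The repackaging through $\wh{G}$ is cosmetic; no gap.
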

\begin{proof}
  By simple calculation, we have that $$\mathcal{O}(M_{B_1}) = \frac{ \Z \sm N\Z }{N},\; \mathcal{O}(M_{B_2}) = \frac{ \Z \sm N\Z }{pN}. $$
  It follows from \eqref{zero-set-random-convolution} that
  \begin{align*}
    \mathcal{O}(\wh{\mu}_\eta) & = \big( tN\mathcal{O}(M_{B_1}) \big) \cup \bigcup_{k=2}^\f (tN)^k \mathcal{O}(M_{B_2}) \\
    & = \big( t(\Z \setminus N\Z) \big) \cup \bigcup_{k=1}^\f \frac{t^{k+1}N^k (\Z \setminus N\Z)}{p} \\
    & \subseteq \left( \Z \cup \frac{N\Z}{p} \right) \setminus \{0\}.
  \end{align*}

  Suppose that $\mathcal{Z}(\mu_\eta) \ne \emptyset$. Take $\xi_0 \in \mathcal{Z}(\mu_\eta) \cap [0,1)$.
  Then we have that $\widehat{\mu}_\eta(\xi_0 + k) = 0$ for all $k \in \Z$.
  It follows that $\xi_0, \xi_0 +1 \in \mathcal{O}(\widehat{\mu}_\eta)$.
  Thus, there exists $k_1,k_2 \in \Z$ such that $$ \xi_0 = \frac{k_1 N}{p},\; \xi_0 + 1 = \frac{k_2 N}{p}.$$
  So we have that $$p = (k_2 -k_1)N.$$
  This contradicts with $\gcd(N,p)=1$.
  Therefore, we conclude that $\mathcal{Z}(\mu_\eta) = \emptyset$.
\end{proof}
%This implies that the conditions in Theorem \ref{theorem-integral-periodic-zero-empty-1} is \emph{not} necessary.

\begin{proposition}\label{spectrality-1-occur-infinite}
  For $\omega \in \Omega$, if $\omega = 2^\f$ or the symbol ``1" occurs infinitely many times in $\omega$, then the infinite convolution $\mu_\omega$ is a spectral measure.
\end{proposition}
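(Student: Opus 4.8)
The plan is to split along the dichotomy in the hypothesis and apply Theorem~\ref{main-spectrality} in each case. When $\omega = 2^\f$ the measure $\mu_\omega = \delta_{(tN)^{-1}B_2} * \delta_{(tN)^{-2}B_2} * \cdots$ is exactly the self-similar measure $\mu_{tN,B_2}$ attached to the admissible pair $(N_2,B_2)$, so Corollary~\ref{Laba-Wang-result} gives spectrality at once. The entire work therefore concentrates on the case in which the symbol ``1'' occurs infinitely often, where I would produce a suitable weakly convergent subsequence of the rescaled tails whose limit has empty integral periodic zero set.

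In that case I would first observe that, because every contraction ratio equals $tN$, the rescaled tails satisfy $\nu_{>n} = \mu_{\sigma^n(\omega)}$, where $\sigma$ is the left shift. Let $p_1 < p_2 < \cdots$ enumerate the positions at which ``1'' appears and set $n_j = p_j - 1$, so that the leading symbol of $\sigma^{n_j}(\omega)$ is $\omega_{p_j} = 1$. By compactness of $\Omega$ I would pass to a subsequence along which $\sigma^{n_j}(\omega) \to \zeta$ for some $\zeta \in \Omega$; since the first coordinate is constantly $1$ along the sequence, $\zeta_1 = 1$. The weak convergence $\nu_{>n_j} = \mu_{\sigma^{n_j}(\omega)} \to \mu_\zeta$ then follows from the uniform finite-level approximation of Lemma~\ref{lemma-uniform-approximation} (specialized to $n_k \equiv 1$): once $\sigma^{n_j}(\omega)$ agrees with $\zeta$ on a long enough prefix the truncated convolutions coincide, and the uniformly small tails control the error. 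By Theorem~\ref{main-spectrality} it then remains only to prove $\mcal{Z}(\mu_\zeta) = \emptyset$.

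The heart of the argument---and the step I expect to be the main obstacle---is to upgrade Lemma~\ref{lemma-mu-eta-emptyset} from the single sequence $\eta = 12^\f$ to an arbitrary $\zeta$ with $\zeta_1 = 1$. Using \eqref{zero-set-random-convolution} together with $\mathcal{O}(M_{B_1}) = (\Z \sm N\Z)/N$ and $\mathcal{O}(M_{B_2}) = (\Z \sm N\Z)/(pN)$, I would inspect the $k$-th term $(tN)^k \mathcal{O}(M_{B_{\zeta_k}})$. The key observation is that $\zeta_1 = 1$ makes the $k=1$ term equal to $t(\Z \sm N\Z) \sse \Z$, while for every $k \ge 2$ the factor $N^{k-1}$ forces the term into $N\Z \sse \Z$ when $\zeta_k = 1$ and into $(N\Z)/p$ when $\zeta_k = 2$, irrespective of the remaining symbols. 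Hence $\mathcal{O}(\wh{\mu}_\zeta) \sse \big(\Z \cup (N\Z)/p\big) \sm \{0\}$, which is precisely the containment exploited in Lemma~\ref{lemma-mu-eta-emptyset}. Taking $\xi_0 \in \mcal{Z}(\mu_\zeta) \cap [0,1)$, both $\xi_0$ and $\xi_0 + 1$ lie in this set; the integer alternative is excluded because $0 \notin \mathcal{O}(\wh{\mu}_\zeta)$, so $\xi_0 = k_1 N/p$ and $\xi_0 + 1 = k_2 N/p$, forcing $p = (k_2 - k_1)N$ and contradicting $\gcd(N,p)=1$. This gives $\mcal{Z}(\mu_\zeta)=\emptyset$ and closes the proof. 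The only genuinely delicate point is verifying that the tail symbols of $\zeta$ cannot spoil the containment into $\Z \cup (N\Z)/p$; this is exactly where the hypothesis $\zeta_1 = 1$ (equivalently, infinitely many ``1''s) is essential, since a leading ``2'' would produce a $k=1$ term $t(\Z \sm N\Z)/p$ lying outside $(N\Z)/p$.
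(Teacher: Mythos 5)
Your proposal is correct, and it works inside the same overall framework as the paper (reduce to Theorem \ref{main-spectrality} by exhibiting a weakly convergent subsequence of the tails $\nu_{>n}=\mu_{\sigma^n(\omega)}$ whose limit has empty integral periodic zero set), but the way you produce and handle that limit is genuinely different and somewhat more streamlined. The paper splits the ``infinitely many 1's'' case in two: if $\omega$ avoids some block $2^\ell$, it takes an arbitrary subsequential limit $\zeta$, notes that ``1'' still occurs infinitely often in $\zeta$, and invokes the gcd criterion of Corollary \ref{coro-finite-admissible-pair}; if $\omega$ contains arbitrarily long blocks of 2's, it shifts to carefully chosen positions so that the limit is exactly $\eta=12^\f$ and applies Lemma \ref{lemma-mu-eta-emptyset}. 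You instead shift to the positions of the ``1''s, take any subsequential limit $\zeta$ (which then satisfies $\zeta_1=1$), and prove $\mcal{Z}(\mu_\zeta)=\emptyset$ directly. Your key observation --- that in \eqref{zero-set-random-convolution} every term with $k\ge 2$ lies in $N\Z\cup (N\Z)/p$ regardless of the symbol $\zeta_k$, so the containment $\mathcal{O}(\wh{\mu}_\zeta)\sse\big(\Z\cup (N\Z)/p\big)\sm\{0\}$ requires only $\zeta_1=1$ --- is correct and strictly generalizes Lemma \ref{lemma-mu-eta-emptyset}; the ensuing contradiction with $\gcd(N,p)=1$ is then identical to the paper's. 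The payoff of your route is that a single generalized lemma replaces both the case split and the appeal to Corollary \ref{coro-finite-admissible-pair}; the paper's route has the modest advantage of reusing already-established general tools instead of reproving the zero-set containment in greater generality.
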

\begin{proof}
For $\omega=2^\f$, the infinite convolution $\mu_{\omega}$ is the self-similar measure generated by the admissible pair $(N_2,B_2)$, and it follows from the classical result by {\L}aba and Wang \cite{Laba-Wang-2002} that it is a spectral measure.
In the following, we assume that the symbol ``1" occurs infinitely many times in $\omega$.

Write $\mu = \mu_{\omega}$.
Then the measure $\mu$ is the infinite convolution generated by the sequence of admissible pairs $\{ (N_{\omega_k}, B_{\omega_k}) \}_{k=1}^\f$.
Recall the notation $\nu_{>n}$ defined in \eqref{nu-large-than-n} for the infinite convolution $\mu$.
Then we have that $\nu_{>n} = \mu_{\sigma^n(\omega)}$.
The proof is divided into two cases.

\textbf{Case (i)}: there exists $\ell \in \N$ such that the sequence $\omega$ does not contain the finite word $2^\ell$.
By the compactness of $\Omega$, there exists a subsequence $\set{n_j}$ such that $\{\sigma^{n_j}(\omega)\}$ converges to $\zeta$ in $\Omega$ for some $\zeta \in \Omega$.
As in the proof of Proposition \ref{prop-bounded}, we have that the sequence $\{ \nu_{>n_j}=\mu_{\sigma^{n_j}(\omega)} \}$ converges weakly to $\mu_\zeta$.
Note that the sequence $\sigma^{n_j}(\omega)$ does not contain the finite word $2^\ell$ for every $j \ge 1$.
Thus the sequence $\zeta$ also does not contain the finite word $2^\ell$.
This means that the symbol ``1'' occurs infinitely many times in $\zeta$.
It follows from Corollary \ref{coro-finite-admissible-pair} that $\mcal{Z}(\mu_\zeta) = \emptyset$.
Therefore, by Theorem \ref{main-spectrality}, we conclude that $\mu=\mu_\omega$ is a spectral measure.

\textbf{Case (ii)}: for every $\ell \ge 1$, the sequence $\omega$ contains the finite word $2^\ell$.
Note that the symbol ``1" occurs infinitely many times in $\omega$.
Let $$ \{ k_1 < k_2 < \cdots < k_j < \cdots \} = \set{k \ge 1: \omega_k =1}. $$
Then we have that $$\limsup_{j \to \f} (k_{j+1} - k_j)  = \f.$$
So we may find a subsequence $\{n_j\}$ such that $$\lim_{j \to \f} (k_{n_j+1} - k_{n_j})  = \f.$$
Note that the sequence $\sigma^{k_{n_j} -1} (\omega)$ begins with the finite word $12^{k_{n_j+1} - k_{n_j} -1}$.
Thus, we have that $\{\sigma^{k_{n_j} -1} (\omega)\}$ converges to $\eta = 12^\f$ in $\Omega$.
As in the proof of Proposition \ref{prop-bounded}, we have that the sequence $\{ \nu_{> k_{n_j} -1} =\mu_{\sigma^{k_{n_j} -1}(\omega)} \}$ converges weakly to $\mu_\eta$.
By Lemma \ref{lemma-mu-eta-emptyset}, we have that $\mathcal{Z}(\mu_\eta) = \emptyset$.
Therefore, by Theorem \ref{main-spectrality}, we conclude that $\mu=\mu_\omega$ is a spectral measure.
\end{proof}

Recall that we write
$$\mu_{N,B} = \delta_{N^{-1}B} * \delta_{N^{-2} B} * \cdots * \delta_{N^{-k} B} * \cdots $$
for the self-similar measure.

\begin{lemma}\label{lemma-no-overlap}
  Suppose that $t \ge 2$ and let $\mu= \mu_{tN,B_2}$.
  Then we have that for all $j\in \Z \setminus \{0\}$, $$\mu\big( \mathrm{spt}(\mu) +j \big)=0.$$
\end{lemma}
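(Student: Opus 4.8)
The plan is to write $K=\mathrm{spt}(\mu)$ and $f(j)=\mu(K+j)$, and to show $f(j)=0$ for every $j\in\Z\setminus\{0\}$ by converting the self-similarity of $\mu$ into a recursion for $f$ and then analysing that recursion as a \emph{deterministic} dynamical system on a finite set of integers. Throughout set $S_a(x)=(tN)^{-1}(x+pa)$ for $a\in\{0,\dots,N-1\}$ and $D=\frac{p(N-1)}{tN-1}$, so that $K\subseteq[0,D]$, $f(0)=1$, $0\le f\le 1$, and $f(j)=0$ whenever $|j|>D$ (then $K+j$ is disjoint from $K$). Since $\tfrac{N-1}{tN-1}<1$ we have $D<p$, so the sets $K+pa$ lie in the pairwise disjoint intervals $[pa,pa+D]$; as the attractor identity gives $tNK=\bigcup_{a}(K+pa)$, this union is in fact disjoint. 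Combining the self-similar measure identity $\mu(B)=\frac1N\sum_{a}\mu(tNB-pa)$ with this disjoint decomposition, applied to $B=K+j$, yields
$$f(j)=\frac1N\sum_{a=0}^{N-1}\mu\big(tNK+tNj-pa\big)=\frac1N\sum_{a,b=0}^{N-1}f\big(tNj+p(b-a)\big).$$

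Next I would isolate two arithmetic facts valid for $j\ne0$, writing $c=b-a\in\{-(N-1),\dots,N-1\}$. \emph{Uniqueness:} the integers $c$ with $|tNj+pc|\le D$ fill an interval of length $\tfrac{2D}{p}=\tfrac{2(N-1)}{tN-1}<1$ (this is exactly where $t\ge 2$ enters), so there is at most one such $c$. \emph{Nonvanishing:} if $tNj+pc=0$ then $N\mid pc$, and $\gcd(N,p)=1$ forces $N\mid c$, hence $c=0$ and $j=0$; thus for $j\ne0$ no admissible $c$ can send $tNj+pc$ to $0$. Since every term of the recursion with $|tNj+pc|>D$ vanishes, only the unique surviving difference $c^{\ast}$ (if it exists at all) contributes, giving the one-term relation
$$f(j)=\frac{N-|c^{\ast}|}{N}\,f\big(\psi(j)\big),\qquad \psi(j):=tNj+pc^{\ast},$$
where $\psi(j)$ is again a \emph{nonzero} integer with $|\psi(j)|\le D$; if no admissible $c^{\ast}$ exists, then $f(j)=0$ outright.

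Finally I would iterate. The map $\psi$ carries the finite set $\{s\in\Z:0<|s|\le D\}$ into itself, so the orbit $j,\psi(j),\psi^{2}(j),\dots$ either terminates (forcing $f(j)=0$) or is eventually periodic, and in all cases $f(j)\le\prod_{k\ge0}\frac{N-|c^{\ast}(\psi^{k}(j))|}{N}$. On any cycle the chosen differences cannot all be $0$: otherwise $\psi$ would act as multiplication by $tN\ge2$ around the cycle, contradicting that $|s|$ returns to its starting value. Hence a factor $\tfrac{N-1}{N}<1$ recurs once per period, the infinite product is $0$, and $f(j)=0$. Taking $E=\mathrm{spt}(\mu)$, this is precisely the hypothesis needed to apply Theorem \ref{theorem-integral-periodic-zero-empty-2}.

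The step I expect to be the crux — and where both hypotheses are indispensable — is the collapse of the a priori $N^{2}$-term recursion to a single term: uniqueness (from $t\ge2$, equivalently $D<p$ and the sub-unit interval length) makes $\psi$ a genuine deterministic map rather than a branching process, while nonvanishing (from $\gcd(N,p)=1$) keeps the whole orbit away from $0$, so the contracting factors $\tfrac{N-|c^{\ast}|}{N}$ are never reset by the boundary value $f(0)=1$. Once these two facts are in place, the escape-to-a-cycle argument is routine.
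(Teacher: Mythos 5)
Your argument is correct, and it takes a genuinely different route from the paper's. You work with the translate function $f(j)=\mu(\mathrm{spt}(\mu)+j)$ and the self-similar identity $\mu(B)=\frac1N\sum_{a}\mu(tNB-pa)$, turning the problem into a deterministic dynamical system on the finite set of nonzero integers of modulus at most $D=p(N-1)/(tN-1)$; the paper instead lifts to the coding space $\Sigma=\{0,\dots,N-1\}^{\N}$ with the uniform Bernoulli measure, shows that any point of $\mathrm{spt}(\mu)\cap(\mathrm{spt}(\mu)+j)$ forces a uniquely determined digit sequence $(\zeta_k)\subseteq\{0,\dots,2N-2\}$ for $j/p+(N-1)/(tN-1)$ in base $tN$, deduces from positivity of $\prod_k\#\mathcal{A}_{\zeta_k}/N$ that $(\zeta_k)$ ends in $(N-1)^{\f}$, and then reaches a contradiction by tracking the orbit of $j/p+(N-1)/(tN-1)$ under multiplication by $tN$ modulo $1$ using $\gcd(N,p)=1$. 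The two arithmetic inputs enter in exactly parallel places: your uniqueness of $c^{\ast}$ (interval of length $2(N-1)/(tN-1)<1$) is the paper's uniqueness of the digit expansion with digits in $\{0,\dots,2N-2\}\subseteq\{0,\dots,tN-1\}$, and your exclusion of $tNj+pc=0$ via coprimality is the paper's final computation with $p\mid(tN)^{k_0}j$. What your version buys is a cleaner, more conceptual mechanism for the contradiction — the contraction factor $(N-|c^{\ast}|)/N$ recurring along a cycle replaces the paper's explicit fractional-part estimates $\frac{q}{p}+\frac{N-1}{tN-1}>\frac{2N-1}{tN}>x_{k_0-1}$ — at the mild cost of having to justify the disjointness of $tNK=\bigcup_a(K+pa)$ (which you do correctly via $D<p$) so that the measure identity really splits term by term. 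Both proofs are complete; yours is arguably the more transparent of the two.
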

\begin{proof}
  Note that $B_2 = p \{ 0,1,\ldots, N-1 \}$.
  We have that $$\mathrm{spt}(\mu) = \left\{ p\sum_{k=1}^{\f}\frac{\ep_k}{(tN)^k} : \ep_k \in \{0,1,\ldots,N-1\} \text{ for } k\in \N \right\}.$$
  Let $\Sigma = \{ 0,1,2,\ldots, N -1\}^\N$ and let $\PP$ be the uniform Bernoulli measure on $\Sigma$.
  Consider the coding mapping $\pi_{tN}: \Sigma \to \mathrm{spt}(\mu)$ defined by $$\pi_{tN}\big( (\ep_k) \big) = p\sum_{k=1}^{\f}\frac{\ep_k}{(tN)^k}.$$
  Then we have that $\mu = \PP \circ \pi_{tN}^{-1}$.

  Suppose that there exists $j \in \Z\setminus \{0\}$ such that $\mu(\mathrm{spt}(\mu)+j)>0$.
  Let $$\Sigma'=\set{ (\ep_k) \in \Sigma: p\sum_{k=1}^{\f} \frac{\ep_k}{(tN)^k} \in  \mathrm{spt}(\mu) +j }.$$
  Then we have that $\PP(\Sigma') >0$.
  For $(\ep_k) \in \Sigma'$, we have that
  $$ p\sum_{k=1}^{\f} \frac{\ep_k}{(tN)^k} = j+ p\sum_{k=1}^{\f} \frac{\eta_k}{(tN)^k}$$ for some $(\eta_k) \in \Sigma$.
  That is, $$\frac{j}{p} + \frac{N-1}{tN-1} = \sum_{k=1}^{\f} \frac{\ep_k + (N-1-\eta_k)}{(tN)^k}.$$
  Note that $\ep_k + (N-1-\eta_k) \in \set{0,1,2,\ldots, 2N-2}$ for each $k \ge 1$.
  Thus, there exists a unique sequence $(\zeta_k)\in \set{0,1,2,\ldots, 2N-2}^\N$ such that $$ \frac{j}{p} + \frac{N-1}{tN-1} = \sum_{k=1}^{\f} \frac{\zeta_k}{(tN)^k}. $$
  It follows that for $k \ge 1$, $$ \zeta_k = \ep_k + (N-1 -\eta_k),$$
  i.e., $$\ep_k = \zeta_k + \eta_k - N + 1.$$
  Thus, for each $k \ge 1$, we have that
  $$ \ep_k \in
  \begin{cases}
    \mathcal{A}_{\zeta_k} = \set{0, 1,\ldots, \zeta_k}, & \mbox{if } \zeta_k < N-1; \\
    \mathcal{A}_{\zeta_k} = \set{0,1,\ldots,N-1}, & \mbox{if } \zeta_k = N-1; \\
    \mathcal{A}_{\zeta_k} = \set{\zeta_k -N+1, \zeta_k -N +2, \ldots , N-1}, & \mbox{if } \zeta_k > N-1.
  \end{cases}
  $$
  So we get that $$\Sigma' = \set{ (\ep_k) \in \Sigma: \ep_k \in \mathcal{A}_{\zeta_k} \text{ for } k \in \N }.$$
  It follows that $$\PP(\Sigma') = \lim_{n \to \f} \prod_{k=1}^n \frac{\#\mathcal{A}_{\zeta_k}}{N}. $$
  Note that $\PP(\Sigma') >0$.
  Thus the sequence $(\zeta_k)$ ends with $(N-1)^\f$.

  Let $k_0 = \min\{ k \in \N: \zeta_n =N-1 \text{ for all } n > k \}$. Then $k_0 \ge 1$ and $\zeta_{k_0} \ne N -1$.
  Write $$x= \frac{j}{p} + \frac{N-1}{tN-1},\; x_k = \sum_{\ell=1}^{\f} \frac{\zeta_{k+\ell}}{(tN)^\ell}. $$
  Note that $$(tN)^k \frac{N-1}{tN-1} \pmod{1} \equiv \frac{N-1}{tN-1}.$$
  Thus we have that $$(tN)^k x \pmod{1} \equiv \frac{(tN)^k j}{p} + \frac{N-1}{tN-1} \pmod{1} \equiv x_k .$$
  Since $x_{k_0}=(N-1)/(tN-1)$, we must have that $$  p \mid (tN)^{k_0}j. $$
  Moreover, we have that $$ p \nmid (tN)^{k_0-1}j. $$

  Let $q \in \{0,1,\ldots,p-1\}$ satisfy $q \equiv (tN)^{k_0-1}j \pmod{p}$.
  Since $ p \mid (tN)^{k_0}j$, we have that $p \mid qtN$.
  Noting that $\gcd(N,p)=1$, we obtain that $p \mid qt$.
  It follows that $$N \le \frac{q t N}{p} \le (t-1)N. $$
  Thus, we have that
  $$\frac{q}{p} + \frac{N-1}{tN-1} \le \frac{(t-1)N}{tN} +\frac{N-1}{tN-1} < 1,$$
  and $$\frac{q}{p} + \frac{N-1}{tN-1} \ge \frac{N}{tN} +\frac{N-1}{tN-1} > \frac{2N-1}{tN}. $$
  On the one hand, $$ x_{k_0-1} \equiv (tN)^{k_0 -1} x \pmod{1} = \frac{q}{p} + \frac{N-1}{tN-1} > \frac{2N-1}{tN}.$$
  On the other hand, $$x_{k_0-1} = \sum_{\ell=1}^{\f} \frac{\zeta_{k_0-1+\ell}}{(tN)^\ell} < \frac{\zeta_{k_0}+1}{tN} \le \frac{2N-1}{tN}.$$
  This is a contradiction.

  Therefore, we obtain that $\mu\big( \mathrm{spt}(\mu) +j \big)=0$ for all $j \in \Z \setminus \{0\}$.
\end{proof}

\begin{proof}[Proof of Theorem \ref{special-case}]
  By Proposition \ref{spectrality-1-occur-infinite}, it remains to consider the infinite convolution $\mu_\omega$ for $\omega \in \Omega$ in which the symbol ``1'' occurs only finitely many times.
  In the following, we fix $\omega\in \Omega$ satisfying $$\omega_{k_0}=1 \text{ and } \omega_{k_0+1} \omega_{k_0+2} \omega_{k_0+3} \cdots = 2^\f$$ for some $k_0 \in \N$.

  (i) For the case that $t=1$,
  let $$ B = N^{k_0-1} B_{\omega_1} + N^{k_0-2} B_{\omega_2} + \cdots + N B_{\omega_{k_0-1}}+ B_{\omega_{k_0}}, $$
  and define $$\widetilde{\mu}_\omega = \delta_{B} * \mu_{N, B_2}.$$
  Note that $$\mu_\omega = \delta_{N^{-1} B_{\omega_1} } * \cdots * \delta_{N^{-k_0} B_{\omega_{k_0}} } * \delta_{N^{-k_0-1} B_2 } * \delta_{N^{-k_0-2} B_2 } * \cdots. $$
  We have that $\widetilde{\mu}_\omega = \mu_{\omega} \circ T_{N^{k_0},0}^{-1}$, where the function $T_{N^{k_0},0}$ is defined in \eqref{linear-transformation}.
  By Lemma \ref{spectrality-invariant}, $\mu_\omega$ is a spectral measure if and only if $\widetilde{\mu}_\omega$ is a spectral measure.

  Note that $$ \mu_{N,B_2} = \frac{1}{p} \mathcal{L}|_{[0,p]}, $$ where $\mathcal{L}|_{[0,p]}$ denotes the restriction of Lebesgue measure on the interval $[0,p]$.
  Thus, the measure $\widetilde{\mu}_\omega$ may be written as $$ \widetilde{\mu}_\omega = \frac{1}{\# B}\sum_{b \in B} \delta_{b} * \mu_{N, B_2} = \frac{1}{p \cdot \# B} \sum_{b \in B} \mathcal{L}|_{[b,b+p]}. $$
  Noting that $\{0,1,\ldots,N-1\} \subset B$ and $p \ge 2$, it is easy to check that $\widetilde{\mu}_\omega$ is not uniformly distributed on its support.
  However, an absolutely continuous spectral measure must be uniform on its support \cite{Dutkay-Lai-2014}.
  As a consequence, $\widetilde{\mu}_\omega$ is not a spectral measure.
  It follows that $\mu_\omega$ is not a spectral measure.

  (ii) For the case that $t \ge 2$, the measure $\mu=\mu_\omega$ is the infinite convolution generated by the sequence of admissible pairs $\set{(N_{\omega_k}, B_{\omega_k})}_{k=1}^\f$.
  Recall the notation $\nu_{>n}$ defined in \eqref{nu-large-than-n} for the infinite convolution $\mu$.
  Then we have that $\nu_{>n} = \mu_{\sigma^n(\omega)} = \mu_{tN,B_2}$ for all $n \ge k_0$.
  By Lemma \ref{lemma-no-overlap} and Theorem \ref{theorem-IPZS-empty-1}, we have that $\mathcal{Z}(\mu_{tN,B_2}) = \emptyset$.
  It follows from Theorem \ref{main-spectrality} that $\mu=\mu_\omega$ is a spectral measure.
\end{proof}

\section{Examples}\label{section-example}

In this section, we give some examples to illustrate our results.
\begin{example}
  Let $N_1 = N_2 =2$, $B_1 = \set{0,1}$, and $B_2 = \set{0,3}$.
  Example 1.8 in \cite{Dutkay-Lai-2017} showed that the random convolution $\mu_\eta$ is not a spectral measure for $\eta = 1 2^\f$ because $\mu_\eta$ is not uniformly distributed on its support.
  By Theorem \ref{special-case}, for $\omega \in \{1,2\}^{\N}$, we have that $\mu_\omega$ is a spectral measure if and only if $\omega = 2^\f$ or the symbol ``1" occurs infinitely many times in $\omega$.
  In particular, for $$\omega = 1\; 2\; 11\; 12\; 21\; 22\; 111\; 112 \;\cdots$$
  which enumerates all finite words in lexicographical order, we have that $\mu_\omega$ is a spectral measure.
\end{example}

Next, we construct some new examples of spectral measures by applying Theorem \ref{main-spectrality} and Theorem \ref{theorem-IPZS-empty-1}.
The spectrality of infinite convolutions with three elements in digit sets has been studied in \cite{An-He-He-2019,Fu-Wen-2017}.
However, the following example cannot be deduced directly from these known results.

\begin{example}\label{new-example}
  Let $N > 3$ be an integer with $3\mid N$. Choose a sequence of integers $\{b_j\}_{j=1}^\f$ such that $3 \mid b_j$ and
  \begin{equation}\label{example-limit}
    \lim_{j\to \f} \frac{b_j}{N^j} = n_0 + \frac{5}{6}
  \end{equation}
  for some $n_0 \in \Z$.
For $k \ge 1$, we define
\begin{equation*}
B_k =
  \begin{dcases}
    \{2,4 , b_j \}, & \mbox{if } k = j(j+1)/2 \text{ for some } j \ge 1; \\
    \{0,2,4\}, & \mbox{otherwise}.
  \end{dcases}
\end{equation*}
Then the infinite convolution
\begin{equation}\label{example-infinite-convolution}
  \mu = \delta_{N^{-1}B_1} * \delta_{N^{-2}B_2} * \cdots * \delta_{N^{-k} B_k} * \cdots
\end{equation}
is a spectral measure.

Since $B_k \equiv \{0,1,2\} \pmod{3}$ for $k \ge 1$, the discrete measure $\delta_{N^{-1}B_k}$ admits a spectrum $L =\{0,N/3,2N/3\}$ for all $k \ge 1$.
Thus, $\{(N,B_k)\}_{k=1}^\f$ is a sequence of admissible pairs.
Let $$ M = \sup_{j \ge 1} \frac{|b_j|}{N^j}. $$
It is easy to check that $$ \sum_{k=1}^\f \frac{\max\{ |b|: b \in B_k\}}{N^k} \le 1+ \sum_{j=1}^{\f} \frac{|b_j|}{N^{\frac{j(j+1)}{2}}} \le 1 + \sum_{j=1}^{\f} \frac{M}{N^{\frac{j(j-1)}{2}}} \le  1+2M. $$
Thus the infinite convolution $\mu$ defined in \eqref{example-infinite-convolution} exists.

Let $n_j = j(j+1)/2$ for $j \ge 1$.
Then we have that
  \begin{align*}
    \nu_{>n_j} & = \delta_{N^{-1} B_{n_j+1}} * \delta_{N^{-2} B_{n_j+2}} * \cdots * \delta_{N^{-k} B_{n_j+k}} * \cdots \\
    & = \delta_{N^{-1} \{0,2,4\} } * \cdots * \delta_{N^{-j} \{0,2,4\} } * \delta_{N^{-(j+1)} \{2,4,b_{j+1}\} } * \nu_j,
  \end{align*}
where $\nu_j$ is the tail part of infinite convolution.
One can easily check that
\begin{equation}\label{support-nu-j}
  \mathrm{spt}(\nu_j) \subset \left[ -\frac{1+2M}{N^{j+1}}, \frac{1+2M}{N^{j+1}} \right].
\end{equation}
Note that
\begin{align*}
  \nu_{>n_j} & = \frac{2}{3} \delta_{N^{-1} \{0,2,4\} } * \cdots * \delta_{N^{-j} \{0,2,4\} } * \delta_{N^{-(j+1)} \{2,4 \}} * \nu_j \\
  & \quad\quad + \frac{1}{3} \delta_{N^{-1} \{0,2,4\} } * \cdots * \delta_{ N^{-j} \{0,2,4\} } * \delta_{\{b_{j+1}/N^{j+1}\}} * \nu_j.
\end{align*}
Thus, by \eqref{example-limit} and \eqref{support-nu-j}, we have that $\{\nu_{>n_j}\}$ converges weakly to
$$\nu = \frac{2}{3} \mu_{N,\{0,2,4\}} + \frac{1}{3} \delta_{\{n_0+5/6\}} * \mu_{N,\{0,2,4\}}, $$
where $\mu_{N,\{0,2,4\}} = \delta_{N^{-1} \{0,2,4\} } *  \delta_{N^{-2} \{0,2,4\} } * \cdots * \delta_{N^{-k} \{0,2,4\} } * \cdots$.

Since $5/6 \not\in \mathrm{spt}(\mu_{N,\{0,2,4\}})$, there exists $0< \gamma <1/6$ such that
$$(5/6-\gamma, 5/6 + \gamma) \cap \mathrm{spt}(\mu_{6,\{0,2,4\}}) = \emptyset.$$
Let $E=(n_0+5/6 -\gamma, n_0+5/6 + \gamma)$.
Then we have that $\nu(E)>0$ and $\nu(E+k)=0$ for all $k \in \Z \setminus\{0\}$.
By Theorem \ref{theorem-IPZS-empty-1}, we have that $\mathcal{Z}(\nu) = \emptyset$.
It follows from Theorem \ref{main-spectrality} that the infinite convolution $\mu$ defined in \eqref{example-infinite-convolution} is a spectral measure.
\end{example}

Note that the number $5/6$ in \eqref{example-limit} is deliberately chosen.
More generally, if we choose $x_0 \in [0,1) \setminus \mathrm{spt}(\mu_{N,\{0,2,4\}})$, and substitute  \eqref{example-limit} by the condition that
$$ \lim_{j\to \f} \frac{b_j}{N^j} = n_0 + x_0$$ for some $n_0\in \Z$, then as we have done above, the resulting infinite convolution in Example \ref{new-example} is still a spectral measure.

We give another example of spectral measures, whose digit sets are almost consecutive digit sets.

\begin{example}
  Let $N = tp$ where $t,p \ge 2$ are integers.
  Let $0\le x_0 < 1$ satisfy
  $$ x_0 \not \in \left\{ \sum_{k=1}^{\f} \frac{d_k}{N^k}: d_k \in \{0,1,\ldots, p-1\} \text{ for } k \ge 1 \right\}. $$
  For instance, we may choose $x_0=1/2$.
  Choose a sequence of integers $\{b_j\}_{j=1}^\f$ such that $p \mid b_j$ and
  $$\lim_{j \to \f} \frac{b_j}{N^j} = n_0 + x_0$$ for some $n_0 \in \Z$.
For $k \ge 1$, we define
\begin{equation*}
B_k =
  \begin{cases}
    \{1,2,\ldots,p-1 , b_j \}, & \mbox{if } k = j(j+1)/2 \text{ for some } j \ge 1; \\
    \{0,1,\ldots,p-1 \}, & \mbox{otherwise}.
  \end{cases}
\end{equation*}
Then the infinite convolution
\begin{equation}\label{example-2-infinite-convolution}
  \mu = \delta_{N^{-1}B_1} * \delta_{N^{-2}B_2} * \cdots * \delta_{N^{-k} B_k} * \cdots
\end{equation}
is a spectral measure.

  Since $B_k \equiv \{0,1,\ldots, p-1\} \pmod{p}$, the discrete measure $\delta_{N^{-1}B_k}$ admits a spectrum $L =\{0,t,2t,\ldots,(p-1)t\}$ for all $k \ge 1$.
  Thus, $\{(N,B_k)\}_{k=1}^\f$ is a sequence of admissible pairs.
  Let $n_j = j(j+1)/2$ for $j \ge 1$.
  Note that $$ \nu_{>n_j} = \delta_{N^{-1} \{0,1,\ldots,p-1\} } * \cdots * \delta_{N^{-j} \{0,1,\ldots,p-1\} } * \delta_{N^{-(j+1)} \{1,2,\ldots,p-1,b_{j+1}\} } * \cdots.$$
Then we have that
$\{\nu_{>n_j}\}$ converges weakly to
$$\nu = \frac{p-1}{p} \mu_{N,\{0,1,\ldots,p-1\}} + \frac{1}{p} \delta_{\{n_0+x_0\}} * \mu_{N,\{0,1,\ldots,p-1\}}, $$
where $$\mu_{N,\{0,1,\ldots,p-1\}} = \delta_{N^{-1} \{0,1,\ldots,p-1\} } *  \delta_{N^{-2} \{0,1,\ldots,p-1\} } * \cdots * \delta_{N^{-k} \{0,1,\ldots,p-1\} } * \cdots.$$
Note that $x_0 \not \in \mathrm{spt}(\mu_{N,\{0,1,\ldots,p-1\}})$.
Thus there exists a neighborhood $E$ of $n_0+x_0$ such that $\nu(E)>0$ and $\nu(E+k)=0$ for all $k \in \Z \setminus\{0\}$.
By Theorem \ref{theorem-IPZS-empty-1}, we have that $\mathcal{Z}(\nu) = \emptyset$.
It follows from Theorem \ref{main-spectrality} that the infinite convolution $\mu$ defined in \eqref{example-2-infinite-convolution} is a spectral measure.
\end{example}

\section*{Acknowledgements}
The authors would like to thank the referees for his/her valuable comments and suggestions.
The first author is supported by NSFC No. 12071148, 11971079,  Science and Technology Commission of Shanghai Municipality (STCSM) No.~22DZ2229014.
The second author is supported by  Science and Technology Commission of Shanghai Municipality (STCSM) No.~22DZ2229014.
The third author is supported by Fundamental Research Funds for the Central Universities No.~YBNLTS2023-016.

\end{document}